\newtheorem{theorem}{Theorem}[section]
\newtheorem{lemma}[theorem]{Lemma}
\newtheorem{corollary}[theorem]{Corollary}
\theoremstyle{definition}
\newtheorem{definition}[theorem]{Definition}
\newtheorem{example}[theorem]{Example}
\theoremstyle{remark}
\numberwithin{equation}{section}
\newcommand{\kthree}{
\begin{tikzpicture}[scale=0.5]
	%\tikzstyle{every node}=[draw, shape=rectangle];
	\path (5,7.5)		node[rectangle,draw] (g3) {$g_{3}$};
	\path (10,0)		node[rectangle,draw] (g2) {$g_{2}$};
	\path (0,0)	node[rectangle,draw] (g1) {$g_{1}$};

	\draw (g1) -- (g2);
	\draw (g2) -- (g3);
	\draw (g1) -- (g3);

    \node [below] at (5,-0.5){$a$};
    \node [right] at (8,4){$b$};
    \node[left] at (2,4){$c$};
\end{tikzpicture}
}
\newcommand{\kfour}[5]{
\begin{tikzpicture}[scale=#1,auto,main node/.style={circle,draw,font=\sffamily\Large\bfseries}]
	%\tikzstyle{every node}=[draw, shape=rectangle];
	\path (0,0)			node[draw, shape=rectangle] (g4) {$#5$};
	\path (0,7.5)		node[draw, shape=rectangle] (g3) {$#4$};
	\path (9,-5)		node[draw, shape=rectangle] (g2) {$#3$};
	\path (-9,-5)	node (g1)[draw, shape=rectangle] {$#2$};

	\draw (g1) -- (g2);
	\draw (g2) -- (g3);
	\draw (g2) -- (g4);
	\draw (g3) -- (g4);
	\draw (g4) -- (g1);
	\draw (g1) -- (g3);
	\draw (g1) -- (g4);

    \node [right] at (-4.5,1.25){$6$};
    \node [left] at (4.5,1.25){$20$};
    \node[below] at (0,-5){$6$};
    \node[right] at (0,3.75){$8$};
    \node[below] at (4.5,-2.5){$15$};
    \node[below] at (-4.5,-2.5){$6$};

%  \path[every node/.style={font=\sffamily}]
%    (g1) edge node {$18$} (g2)
%	 edge node {$12$} (g4)
%    (g2) edge node {$20$} (g3)
%    (g3) edge node {$8$} (g4)
%	 edge node {$6$} (g1)
%	(g4) edge node {$15$} (g2);
\end{tikzpicture}
}
\newcommand{\N}{\mathbb{N}}
\newcommand{\Z}{\mathbb{Z}}
\begin{document}

\title{Generalized Integer Splines on Arbitrary Graphs}

\author{Lauren L. Rose}
\thanks{Thanks to Tom Zaslavsky for several helpful conversations.}
\address{Department of Mathematics, Bard College, Annandale-on-Hudson, NY, 12504}
\email{rose@bard.edu}

\author{Jeff Suzuki}
\address{Department of Mathematics, Brooklyn College, Brooklyn, NY 11210}
\email{jsuzuki@brooklyn.cuny.edu}

%    General info
%\subjclass[2000]{Primary 54C40, 14E20; Secondary 46E25, 20C20}
%\date{January 1, 2001 and, in revised form, June 22, 2001.}

\keywords{splines, number theory, module theory, graph theory}

\begin{abstract}
  Generalized integer splines on a graph $G$ with integer edge weights are integer vertex labelings such that if two vertices share an edge in $G$, the vertex labels are congruent modulo the edge weight.  We introduce collapsing operations that reduce any simple graph to a single vertex, carrying with it the edge weight information. This corresponds to a sequence of surjective maps between the associated spline modules, leading to an explicit construction of a $\Z$-module basis in terms of the edge weights.  
\end{abstract}
%\Large.  \cite{julia} concludes with several questions, key among them finding a minimal basis for $\mathbb{F}_{G}$.  We prove the existence of a minimal basis for all graphs $G$, and provide an explicit algorithm producing it.

\maketitle

\section*{Introduction}
%%%%%%%%%%% %%%%%%%%%%%

Consider a graph $G$ with integer edge weights. Label the vertices so that if two vertices are joined by an edge with weight $w$, then their vertex labels are congruent modulo $w$.  This leads to the following questions:
\begin{itemize}
\item
Given arbitrary edge weights, can we always construct a vertex labeling?  
\item
Can we find a finite set that generates all vertex labelings?
\end{itemize}

In \cite{julia}, Tymozcko, et. al. introduced the study of generalized splines on a graph with edges weighted by ideals in a commutative ring $R$.   Their foundational paper concludes with many open questions, primarily about the $R$-module structure of sets of splines and how much this structure is determined by properties of the ring and/or the underlying graph.  A subsequent survey article \cite{julia2} describes the role that generalized splines play in geometry and topology.  

In this paper we set $R$ to be $\Z$, the ring of integers, although the results in this paper hold when $R$ is an Euclidean Ring, with minor modifications.  We answer both of the above questions affirmatively, and in doing so find a complete characterization of spline modules over $\Z$ for arbitrary graphs.

Generalized splines are a generalization of polynomial splines, which are ubiquitous in applied math, computer graphics and approximation theory.  They are often defined to be piecewise polynomial functions on a polyhedral subdivision of $\mathbb{R}^d$. Alfeld and others \cite{Alfeld} studied the question of finding dimensions and bases for vector spaces of splines of restricted polynomial degree.  In \cite{billera1, billera2}, Billera  pioneered the use of algebraic techniques in the study of polynomial splines.  When the polynomial degrees are unrestricted, the set of all splines on a polyhedral complex will be both a ring and an $R$-module, where $R$ is a polynomial ring in $d$ variables over a field. 

The algebraic study of polynomial splines led to a representation of splines as vertex labels of the dual graph of the polyhedral complex \cite{rose4, rose1}, where the edges are labeled with linear forms.  In this new representation, a set of vertex labels is a spline if each edge label divides the difference between its two vertex labels.  This representation easily extends to arbitrary rings, and in fact arises in geometry and topology.

When $R$ is a principal ideal domain, generalized spline modules are always free of rank $n$,  the number of vertices of the graph. In \cite{smith students}, Handschy, et. al, constructed ``flow-up class'' bases for $n$-cycle graphs with integer edge weights.   In \cite{liu} we proved that flow-up class bases exist for spline modules over arbitrary graphs with integer edge weights, and expanded the class of graphs for which a flow-up class basis could be constructed.

The main result of this paper is the construction of flow-up class bases for the spline module $S_G$, for any arbitrary graph $G$ with integer edge weights.  To do this, we introduce two collapsing operations that transform $G$ into a graph with fewer vertices or edges. Using these operations, we can reduce any connected graph to a point. Then we show that each of these operations induces a surjective $\Z$-module map of the associated spline modules.   The main algebraic result is that $S_G$ is the direct sum of the kernels of these maps.  From there, all that is left to do is to construct a basis for each of the kernels, and show that their pre-images form a basis for $S_G$. A nice feature of this construction is that the first non-zero entry of each basis element, viewed as an $n$-tuple of $\Z^n$, can be written in terms of least common multiples and greatest common divisors of a subset of the edge weights.

%
%
%We conclude with a solution to an interpolation problem, namely, if we correctly label a subset of the vertices, under what conditions can we extend these labels to a generalized integer spline?  

%%%%%%%%%%%%%%% 1. PRELIMS %%%%%%%%%%%%%%%%%%%%%%%%%%%%%%%
\section{Preliminaries}

We begin with a formal definition of a generalized integer spline. Let $G$  be a graph with $n$ vertices, edge set $E$, and $A : E \rightarrow \N$  an assignment of positive integer weights to edges of $G$.   

We denote the set of splines on $(G, A)$ by $S_{G}(A)$, or just $S_G$ when there is no ambiguity.   

\begin{definition} Let $G$ a graph with $n$ vertices.
A {\textbf{generalized integer spline}} on  $G$ is an $n$-tuple $(g_{1}, g_{2}, \ldots, g_{n}) \in {\Z}^ n$ such that if vertices $v_i$ and $v_j$ are joined by $e \in E$, then $g_{i} \equiv g_{j} \bmod A(e)$.  We refer to these equations as the \textbf{defining equations} for $S_G$.
\end{definition}

\begin{example}  Let $G$ be a 3-cycle with edge labels $a,b,c$ as in Figure\ref{figure3cycle}.  By the definition above, $(g_1,g_2,g_3) \in Z^3$ is a spline in $S_G$ if the following defining equations, one for each edge, are satisfied.
\begin{align*}
 g_1  &\equiv g_2 \bmod {a}\\
 g_2  &\equiv g_3 \bmod {b}\\
 g_3  &\equiv g_1 \bmod {c}
\end{align*}
\end{example}

\begin{figure}
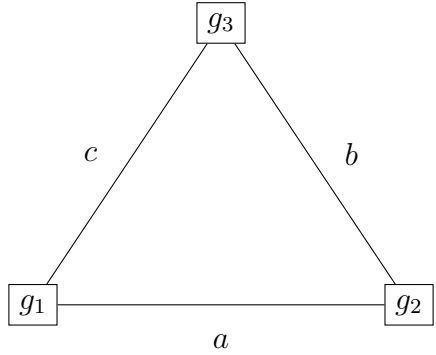


\begin{center}
\kthree
\end{center}
\caption{A $3$-cycle}
\label{figure3cycle}
\end{figure}
{\textbf{Notation:}}  Let  $(r_1, r_2, \ldots r_m)$  denote the GCD and $[r_1, r_2, \ldots r_m]$ denote the LCM of the set $\{r_1, r_2, \ldots r_m\}$. 

The following lemma will be used in later sections.  The proofs are straightforward, so we omit them.

\begin{lemma}  Let $a,b,m,n$ in $\N$.
\label{lemma:gcd}
\begin{enumerate}
\item  $(a,(b,c)) = (a,b,c)$.
\item $[a,[b,c]] = [a,b,c]$.
\item $[(a,c),(b,c)] = ([a,b],c)$.
\item $x \equiv a \bmod m $ and $x \equiv a \bmod n$ if and only if $x \equiv a \bmod [m, n]$.
\end{enumerate}
\end{lemma}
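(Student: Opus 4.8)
The plan is to handle the four parts independently, using only elementary facts about divisibility in $\Z$; the only part that requires an actual idea is (3), and even that reduces to a well-known lattice identity.

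For (1) I would argue by mutual divisibility: any common divisor of $a$, $b$, $c$ divides $(b,c)$ and hence divides $(a,(b,c))$, while conversely $(a,(b,c))$ divides $a$ and divides $(b,c)$, so it divides each of $a$, $b$, $c$ and therefore divides $(a,b,c)$. Since both quantities are positive and divide each other, they are equal. Part (2) is the formal dual: replacing ``divisor'' by ``multiple'' and GCD by LCM throughout, one checks that $[a,[b,c]]$ and $[a,b,c]$ are each common multiples of $a$, $b$, $c$ that divide each other.

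For (3) I would pass to prime factorizations. Fix a prime $p$ and let $\alpha$, $\beta$, $\gamma$ be the exponents of $p$ in $a$, $b$, $c$. The exponent of $p$ in the left-hand side is $\max(\min(\alpha,\gamma),\min(\beta,\gamma))$ and in the right-hand side is $\min(\max(\alpha,\beta),\gamma)$, so it suffices to verify
\[
\max\bigl(\min(\alpha,\gamma),\min(\beta,\gamma)\bigr)=\min\bigl(\max(\alpha,\beta),\gamma\bigr)
\]
for all $\alpha,\beta,\gamma\in\N$; assuming without loss of generality that $\alpha\ge\beta$, both sides equal $\min(\alpha,\gamma)$. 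Since two positive integers with the same $p$-adic valuation at every prime agree, (3) follows.

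For (4), if $m\mid x-a$ and $n\mid x-a$ then $x-a$ is a common multiple of $m$ and $n$, hence $[m,n]\mid x-a$; conversely $m\mid[m,n]\mid x-a$ and $n\mid[m,n]\mid x-a$. The only step carrying any weight here is the distributive-lattice identity inside (3); the remaining parts are routine divisibility bookkeeping, which is presumably why the authors omit the proofs.
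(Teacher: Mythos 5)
Your proof is correct; the paper itself omits the argument entirely ("The proofs are straightforward, so we omit them"), and your divisibility arguments for (1), (2), (4) and the lattice/valuation identity for (3) are exactly the standard details the authors are taking for granted. Nothing to flag.
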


We will need the non-coprime version of the Chinese Remainder Theorem, which we state for convenience.

\begin{theorem}[Non-Coprime Chinese Remainder Theorem] 
\label{theorem:CRT}
Let $a_{i} \in \Z$  and $ m_{i} \in \N$ for $i$ between 1 and $n$. The system of congruences

\begin{align*}
 x  &\equiv a_1 \bmod {m_1}\\
 x  &\equiv a_2 \bmod {m_2}\\
    &\vdots \\
 x  &\equiv a_n \bmod {m_n}
\end{align*}

has a solution if and only if 
$$a_{i} \equiv a_{j} \bmod (m_i, m_j)$$ for all $i, j$ with  $1 \leq i, j \leq n$. 
If a solution exists, then it is unique modulo $[m_1, m_2, \dots , m_n ]$.
\end{theorem}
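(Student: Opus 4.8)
The plan is to prove the two implications separately, handling existence by induction on $n$ after first settling the two-congruence case, which is essentially the solvability criterion for a single linear congruence. Necessity is immediate: if $x$ satisfies all the congruences, then for any $i,j$ the integer $(m_i,m_j)$ divides both $m_i$ and $m_j$, hence divides $x-a_i$ and $x-a_j$, and therefore divides their difference $a_j-a_i$; this is exactly $a_i\equiv a_j\bmod(m_i,m_j)$. So the bulk of the argument is the sufficiency and uniqueness claims.

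For the base case $n=2$, I would look for a solution of the form $x=a_1+m_1t$. The second congruence then reads $m_1t\equiv a_2-a_1\bmod m_2$, a linear congruence in $t$ which is solvable precisely because $(m_1,m_2)\mid a_2-a_1$ by hypothesis. This produces a solution $x$. For uniqueness, if $x$ and $x'$ are two solutions then $m_1\mid x-x'$ and $m_2\mid x-x'$, so by Lemma~\ref{lemma:gcd}(4) we get $[m_1,m_2]\mid x-x'$; conversely shifting a solution by a multiple of $[m_1,m_2]$ again gives a solution. Hence the solution set, when nonempty, is a single residue class modulo $[m_1,m_2]$.

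For the inductive step, assume the theorem for $n-1$ congruences. The first $n-1$ congruences satisfy the compatibility conditions (a subset of the given ones), so by induction they have a common solution $x_0$, unique modulo $M:=[m_1,\dots,m_{n-1}]$; thus together they are equivalent to the single congruence $x\equiv x_0\bmod M$. Combining this with $x\equiv a_n\bmod m_n$ via the $n=2$ case, a solution exists — unique modulo $[M,m_n]=[m_1,\dots,m_n]$ by the $n$-fold form of Lemma~\ref{lemma:gcd}(2) — provided $x_0\equiv a_n\bmod(M,m_n)$. To check this, I would use the $n$-term extension of Lemma~\ref{lemma:gcd}(3), which gives $(M,m_n)=([m_1,\dots,m_{n-1}],m_n)=[(m_1,m_n),\dots,(m_{n-1},m_n)]$, together with Lemma~\ref{lemma:gcd}(4) to verify the congruence one factor $(m_i,m_n)$ at a time: for each $i<n$ we have $x_0\equiv a_i\bmod m_i$, hence $x_0\equiv a_i\bmod(m_i,m_n)$, and combined with the hypothesis $a_i\equiv a_n\bmod(m_i,m_n)$ this yields $x_0\equiv a_n\bmod(m_i,m_n)$ for all $i<n$, and therefore $x_0\equiv a_n\bmod(M,m_n)$.

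The only genuine preliminary work is upgrading parts (2), (3), (4) of Lemma~\ref{lemma:gcd} from three terms to $n$ terms, which is a routine induction using associativity of GCD and LCM. Beyond that there is no real obstacle: the argument is a clean reduction to the single-linear-congruence case, and the compatibility bookkeeping in the inductive step is where one must be careful to invoke the GCD/LCM identities in the right order rather than where any difficulty lies.
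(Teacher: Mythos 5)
The paper states this theorem without proof, citing it as a standard fact needed later, so there is no in-paper argument to compare against. Your proposal is a correct and complete proof: the necessity direction, the reduction of the two-congruence case to a single linear congruence, and the inductive step (replacing the first $n-1$ congruences by $x\equiv x_0 \bmod [m_1,\dots,m_{n-1}]$ and verifying compatibility with the last congruence via the $n$-term versions of Lemma~\ref{lemma:gcd}(3) and (4)) are all sound; this is the standard argument for the non-coprime CRT.
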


\section{Collapsing Operations on $G$}

%We define $G_{v}$ as follows:Replace Sv, the star of v, with Kv, the complete graph on deg(v) vertices.  Everything else in the graph stays the same.examples and figures.  4-wheel...deg(v)=1, 2, or 3.  3 Y-delta operation.  in general, star-clique operation.Labels:star has labels a1,...an corresponding to vertices 1,2,...n,and Kv has labels (ai,aj) corresponding to edge ij.  picture. } formal$V'= V-v$$E = E1 \cup E2$, where E1 = a1...an are the edges incident to v, and E2 are not incident to v.  Then $E' = Ev \cup E2$, where $Ev = aiaj $relationship

In this section, we define two operations on $(G, A)$ that yield another edge-weighted graph $(G', A')$ with either fewer vertices or the same number of vertices and fewer edges.  We will show that for any $(G, A)$, and vertices $v_i$ and $v_j$, there exists a sequence of operations that yield the weighted edge $(K_{2}, A_{*})$, for some $A_*$ in $\N$.  Moreover, we will show that any spline on  $(K_{2}, A_{*})$ can be extended to a spline on $(G, A)$.  We can then use this collapse sequence to construct a $\Z$-module basis for $S_{G}$. 

\begin{definition}  Let $G$ be a graph and $v$ a vertex of $G$ of degree $d$.  
\begin{enumerate}
\item The {\textbf{star of $v$}}, denoted $st(v)$, is the subgraph consisting of $v$ and all edges that contain $v$.  
\item A {\textbf{$d$-clique}} is a subgraph that is a complete graph on $d$ vertices.
\end{enumerate}
\end{definition}

The first collapse operation replaces the star of $v$ with a $d$-clique, where $d$ is the degree of $v$, using the neighbors of $v$ are the vertices.  All other vertices and edges in the graph remain the same, although the new graph may have multiple edges.  We call this a {\textbf{star-clique}} operation, as we are deleting a vertex star, and then adding new edges to the adjacent $d$ vertices to form a $d$-clique.  This generalizes the standard ${\mathbf{Y-\Delta}}$ transform that takes a 3-star to the 3-clique $K_3$. We now define this process more formally.

\begin{definition} Let $v$ be a vertex of $G$ of degree $d$, with adjacent vertices $\{v_1,\ldots,v_d\}$ and incident edge weights $\{a_1,\ldots,a_d\}$.  The {\textbf{star-clique}} operation transforms $(G,A)$ to $(G_v, A_v)$ as follows.  
\begin{itemize}
\item Remove $v$ and the edges of $st(v)$, but keep the adjacent vertices.
\item Add edges between adjacent vertices to form a $d$-clique.
\item Label the new edge between $v_i$ and $v_j$ with $(a_i,a_j)$.
\end{itemize}
\end{definition}

\begin{example}
Let $v_{2}$ be a vertex of degree $1$, with adjacent vertex $v_{1}$ and incident edge $e$ with weight $a$. The star-clique operation replaces the star of $v_{2}$, an edge, with a 1-clique, which is the single vertex $v_{1}$.  We call this a \textbf{leaf deletion}.  See Figure \ref{fig:leafdelete}.
\end{example}

\begin{figure}
\begin{center}
\begin{tabular}{ccc}
\includegraphics[scale=0.3]{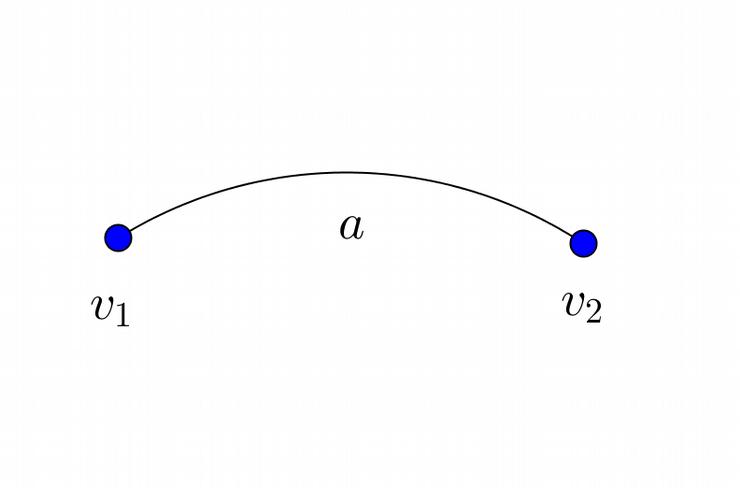}
%    &$\raisebox{1in}{\longrightarrow}$
        &\includegraphics[scale=0.3]{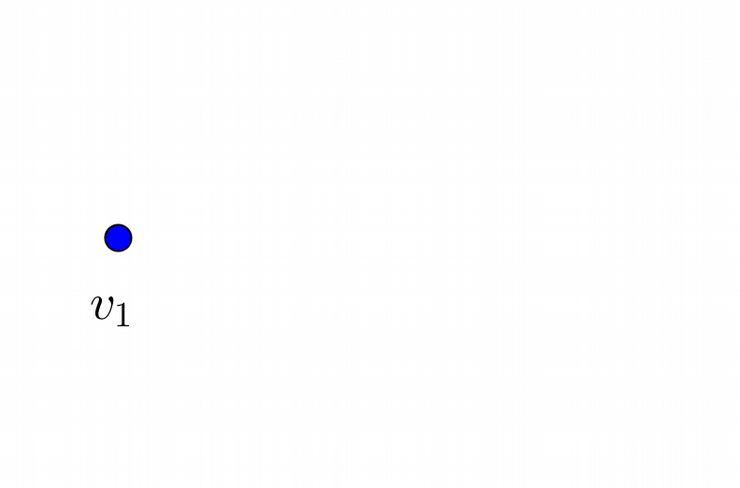}
\end{tabular}
\end{center}
\caption{Leaf Deletion}
\label{fig:leafdelete}
\end{figure}

\begin{example}
Let  $v_{2}$ have degree 2, with adjacent vertices $v_{1}$ and $v_{3}$ and incident edges with weights $a$ and $b$.  The  {\textbf{star-clique}} operation replaces the star of $v_{2}$, i.e. the path $v_{1}v_{2}v_{3}$, with the edge $v_{1}v_{3}$.  This is a {\textbf{path contraction}}, with edge weights $a$ and $b$ replaced by $(a, b)$ (see Figure \ref{fig:pathcontract}).
\end{example}

\begin{figure}
\begin{center}
\begin{tabular}{ccc}
\includegraphics[scale=0.3]{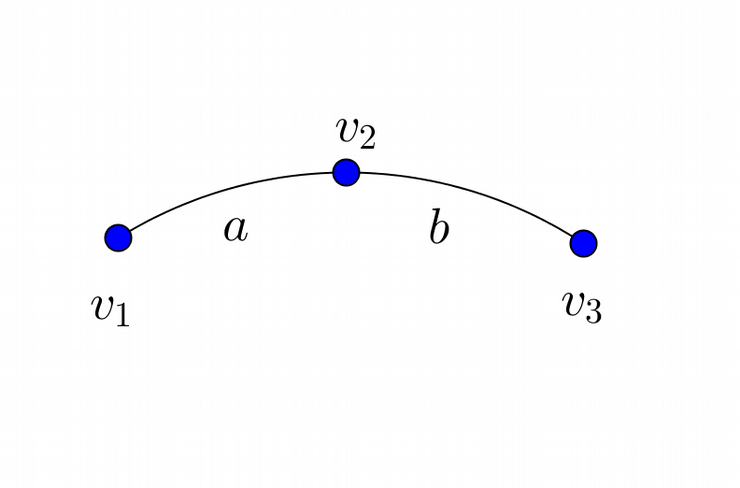}
%    &$\raisebox{1in}{\longrightarrow}$
        &\includegraphics[scale=0.3]{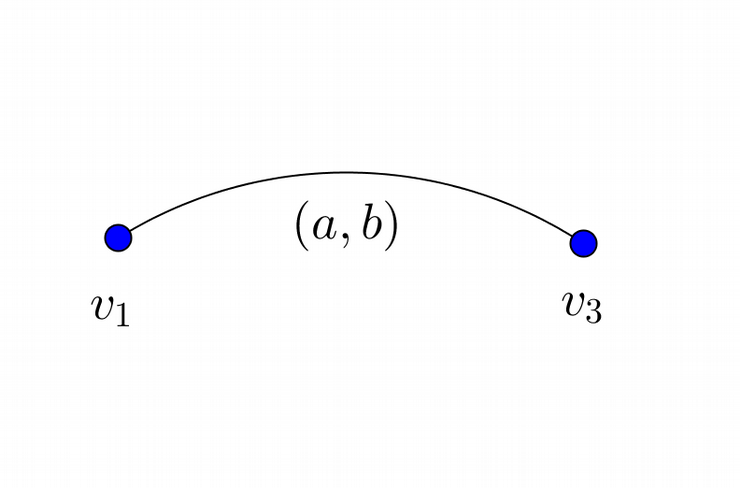}
\end{tabular}
\end{center}
\caption{Path Contraction Operation on $v_2$}
\label{fig:pathcontract}
\end{figure}

\begin{example}
Let  $v_{4}$ be a vertex of degree $3$, with adjacent vertices $v_{1}$, $v_{2}$ and $v_{3}$ and incident edge weights $c,a,b$.  The  {\textbf{star-clique}} operation replaces the star of $v_{4}$, a ``$Y$-graph,'' with a triangle with vertices $v_{1}, v_{2}, v_{3}$.  The new edges will have weights $(a,c),(a,b),(b,c)$, as shown inn Figure \ref{fig:starclique}).
\end{example}

\begin{figure}
\begin{center}
\begin{tabular}{ccc}
\includegraphics[scale=0.3]{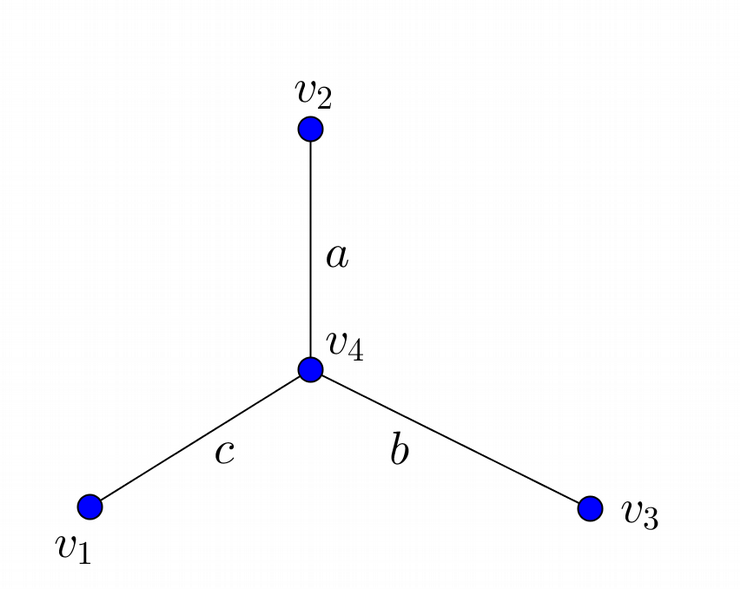}
%    &$\raisebox{1in}{\longrightarrow}$
        &\includegraphics[scale=0.3]{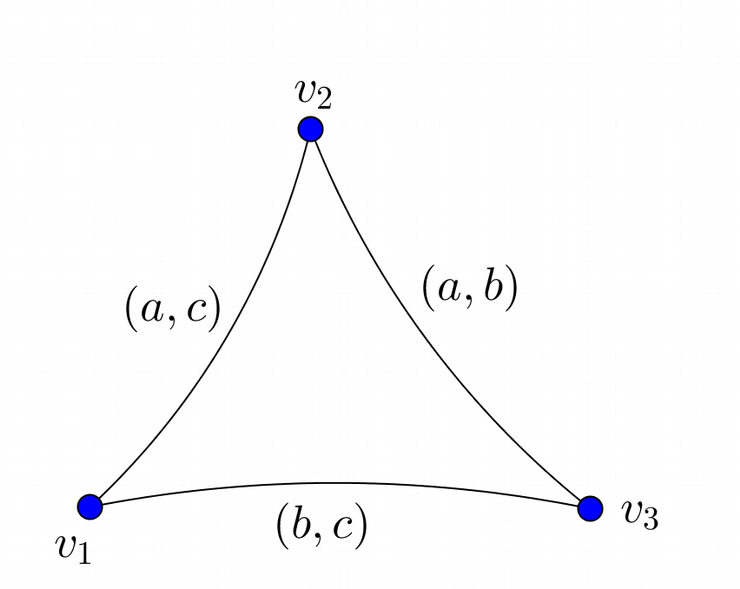}
\end{tabular}
\end{center}
\caption{Star-clique Operation ($Y$-$\Delta$ Transform)}
\label{fig:starclique}
\end{figure}

The new graph $(G_v,A_v)$ has one fewer vertex, but may no longer be a simple graph if some of the $v_i$'s were already be connected in $G$.  

In order to turn $G_v$ into a simple graph, so that we can apply the star-clique operation again, we introduce a second operation, {\textbf{edge collapse}}, that collapses multiple edges between the same pair of vertices (see Figure \ref{fig:edgecollapse}). 

\begin{definition} Let $e_1, e_2, \ldots e_r$ be multiple edges between vertices $v$ and $w$ in $G$ with labels $\{a_1,\ldots,a_r\}$.  The {\textbf{edge-collapes}} operation transforms $(G,A)$ to $(G', A')$ as follows. 
\begin{itemize}
\item Remove $e_1, e_2, \ldots e_r$.
\item  Add a new edge $e$ between $v$ and $w$.
\item  Label $e$ with $[a_1,a_2,\ldots, a_r]$.
\end{itemize}
\end{definition}

%\begin{figure}
%\begin{center}
%\begin{tabular}{ccc}
%\includegraphics[scale=0.3]{}
%    &$\raisebox{1in}{\longrightarrow}$
%        &\includegraphics[scale=0.3]{}
%\end{tabular}
%\end{center}
%\caption{$Y - \Delta$ Transform}
%\label{fig:yd}
%\end{figure}

\begin{example}
Let $v_{1}, v_{2}$ be two vertices joined by edges of weights $a, b$.  The \textbf{edge-collapse} operation replaces these edges with a single edge of weight $[a, b]$ (see Figure \ref{fig:edgecollapse}).
\end{example}

\begin{figure}
\begin{center}
\begin{tabular}{ccc}
\includegraphics[scale=0.3]{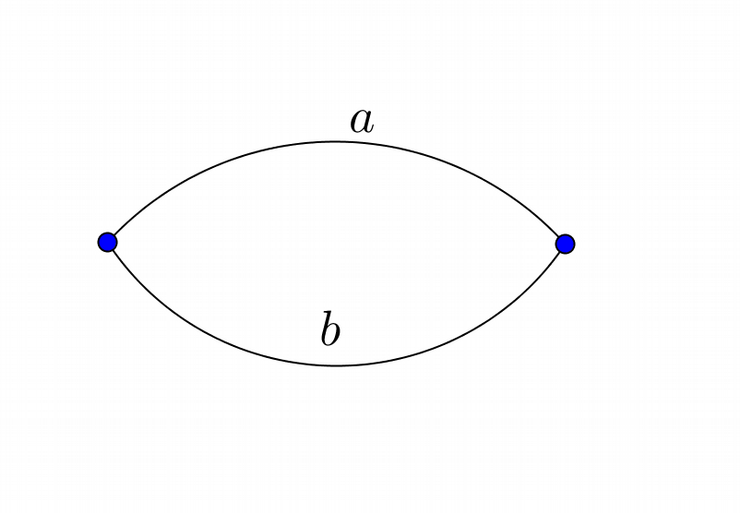}
%    &$\raisebox{0.8in}{\longrightarrow}$
        &\includegraphics[scale=0.3]{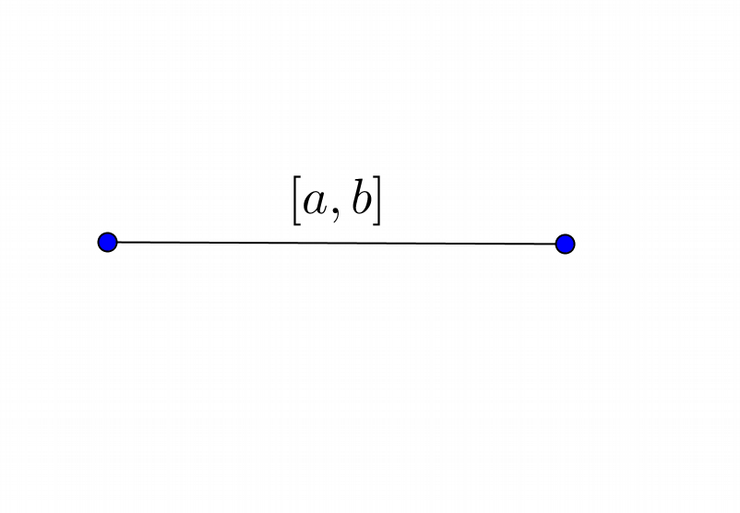}
\end{tabular}
\end{center}
\caption{Edge Collapse}
\label{fig:edgecollapse}
\end{figure}

\begin{lemma}
\label{lemma:connectivity}
Let $(G, A)$ be a simple, connected graph containing at least one vertex $v$.  Then $(G_v, A_v)$ is also connected.
\end{lemma}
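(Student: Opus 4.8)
The plan is to show directly that any two vertices of $G_v$ are joined by a walk in $G_v$, by taking a path in $G$ and rerouting it around the deleted vertex $v$. First I would record that the vertex set of $G_v$ is exactly $V(G)\setminus\{v\}$ (the star-clique operation deletes only $v$ and keeps its neighbors), so it suffices to fix arbitrary $x,y\in V(G)\setminus\{v\}$ and exhibit an $x$--$y$ walk using only edges of $G_v$.

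Since $G$ is connected, choose a simple path $P$ from $x$ to $y$ in $G$ and split into two cases. If $v$ does not lie on $P$, then no edge of $P$ is incident to $v$; such edges are untouched by the star-clique operation, so $P$ is already a path in $G_v$. If $v$ does lie on $P$, then since $v\neq x,y$ it occurs as an interior vertex, and because $P$ is simple it occurs exactly once; thus $P$ has the form $x,\dots,a,v,b,\dots,y$ where $a$ and $b$ are distinct neighbors of $v$ in $G$ (distinct because $P$ is simple). By the definition of the star-clique operation, $G_v$ contains an edge between $a$ and $b$ (with label $(A(av),A(vb))$), so replacing the subpath $a,v,b$ by that edge converts $P$ into a walk from $x$ to $y$ whose edges all lie in $G_v$. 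In either case $x$ and $y$ lie in the same component of $G_v$, so $G_v$ is connected.

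I would then dispose of the degenerate possibilities: if $\deg(v)=1$ the vertex $v$ can never be an interior vertex of a path, so the first case always applies and $G_v=G-v$ is connected; and if $\deg(v)=0$ then $G$ is the single vertex $v$ and $G_v$ is empty, hence vacuously connected. I do not expect any genuine obstacle here — the argument is essentially immediate from the definition of the star-clique operation — the only point requiring a line of care is that the two neighbors $a,b$ flanking $v$ on a simple path are distinct, so that the new edge $ab$ used in the rerouting actually exists in $G_v$.
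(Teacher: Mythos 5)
Your proof is correct and follows essentially the same route as the paper's: take an $x$--$y$ path in $G$ and, if it passes through $v$, replace the subpath $a,v,b$ by the new clique edge $ab$ guaranteed by the star-clique operation. Your extra care about the distinctness of $a$ and $b$ and the degenerate cases is a welcome refinement but does not change the argument.
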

\begin{proof} 
We consider two cases.

\begin{itemize}
\item
Case 1:  If $G_v$ is the empty graph or has only one vertex, then it is vacuously connected.

\item
Case 2:  Assume $G_{v}$ has 2 or more vertices.  Let $x$ and $y$ be distinct vertices of $G_v$.  Hence they are vertices in $G$ since $G_v$ doesn't introduce new vertices.  Since $G$ is connected, there is a path $P$ connecting $x$ and $y$ in $G$.  There are two cases.
\begin{itemize}
\item
First, assume $P$ does not pass through the vertex $v$.  The removal of $v$ does not affect $P$, so $P$ is also a path in $G_{v}$.  
\item
Now suppose $P$ passes through the vertex $v$. Then $P$ passes through vertices $p$ and $q$, where $p$ and $q$ are adjacent to $v$.  When we form $G_v$, the path $pvq$ in $G$ will be replaced by a single edge $pq$.  Thus, we can form a path $P_v$ from $x$ to $y$ in $G_v$ by replacing the edges $pv$ and $vq$ with the edge $pq$ in $G_{v}$.
\end{itemize}
\end{itemize}
\end{proof}

A graph can always be collapsed to a single edge, and even further to a single vertex.

\begin{lemma}
Let $(G, A)$ be a connected edge-weighted graph, and let $v$ and $w$ be vertices of $G$.  
\begin{enumerate}
    \item We can collapse $G$ to the graph $K_2$ with vertices $v$ and $w$.
    \item We can collapse $G$ to the vertex $v$.
\end{enumerate}
\end{lemma}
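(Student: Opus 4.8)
The plan is to prove both parts by induction on the number $n$ of vertices of $G$, maintaining throughout the invariants that the current graph is connected, simple, and still contains $v$ and $w$ as vertices. The two collapsing operations provide exactly the tools needed: a star-clique operation at a vertex $u$ decreases the vertex count by one, while a finite sequence of edge-collapse operations restores simplicity without changing the vertex set or disturbing connectivity.

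For part (1), the base case is $n = 2$. Then the only vertices are $v$ and $w$; if they are joined by a single edge we already have $K_2$, and if they are joined by parallel edges a single edge-collapse produces $K_2$ with vertices $v$ and $w$. For the inductive step, suppose $n > 2$. Since $G$ is connected with at least three vertices, we may choose a vertex $u \notin \{v, w\}$, and $u$ has degree $d \ge 1$. Apply the star-clique operation at $u$ to obtain $(G_u, A_u)$; this removes $u$ and its incident edges and replaces the star of $u$ by a $d$-clique on the neighbors of $u$ (a leaf deletion when $d = 1$, a path contraction when $d = 2$). By Lemma~\ref{lemma:connectivity}, $G_u$ is connected, and it still contains $v$ and $w$ because $u \ne v$ and $u \ne w$. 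The graph $G_u$ may have multiple edges; performing an edge-collapse on each pair of vertices joined by parallel edges (finitely many such pairs) yields a simple graph $G'$. Edge-collapse does not change which pairs of vertices are joined by a path, so $G'$ is still connected, it has $n-1$ vertices, and it contains $v$ and $w$. By the inductive hypothesis, $G'$ can be collapsed to $K_2$ with vertices $v$ and $w$, and prepending the steps above gives such a collapse sequence for $G$.

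Part (2) then follows immediately: collapse $G$ to the edge $K_2$ on $\{v, w\}$ as in part (1), and apply one more star-clique operation at $w$, which now has degree $1$; this leaf deletion removes $w$ and its incident edge, leaving the single vertex $v$.

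I do not anticipate a serious obstacle. The only thing requiring care is the bookkeeping of the invariant: namely, that a star-clique operation can only create loops or parallel edges among the neighbors of the deleted vertex, and never a loop since those neighbors are distinct, so that finitely many edge-collapse operations suffice to return to a simple graph at each stage; and that neither operation ever forces us to delete $v$ or $w$, which is guaranteed by always choosing the star-clique vertex outside $\{v,w\}$ and by the hypothesis $n > 2$.
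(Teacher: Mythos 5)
Your proof is correct and follows essentially the same route as the paper's: repeatedly star-clique a vertex outside $\{v,w\}$, restore simplicity by edge-collapses, invoke the connectivity lemma, and finish part (2) with a leaf deletion at $w$. You have merely recast the paper's ``repeat until done'' as a formal induction on the number of vertices, which is a harmless (indeed slightly more careful) presentation of the identical argument.
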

\label{theorem:reducetok2}

\begin{proof}
Choose a vertex other than $v$ and $w$ and do a star-clique operation.  This will reduce the number of vertices by one.  Now do an edge collapse to remove multiple edges.  Repeat until $v$ and $w$ are the only vertices left, and remove any multiple edges.  Since the graphs at each stage are connected, by Lemma \ref{lemma:connectivity}, we  are left with a single edge between $v$ and $w$.  

For part 2, a star-clique operation on $w$ will transform the edge $vw$ into the single vertex $v$.
\end{proof}

\section{The Spline Modules $S_G$ and $S_{G'}$}

Now that we have constructed the graph collapsing operations, we describe their effects on the spline module $S_G$. In order to do this, we must first fix an ordering of the vertices $v_1,\ldots, v_n$ of $G$.
Next, we partition $S_G$ into \textbf{flow-up classes} $\mathcal{F}_{i}$, which are subsets of splines with the first $i$ entries equal to zero and the $i+1$st entry non-zero.  A \textbf{flow-up class basis} will be a basis that consists of an element from each non-zero flow-up class.  

\begin{definition}
For $0 \leq i < n$, the {\textbf{$i$th flow-up class}}, denoted  $\mathcal{F}_i$,  is the set of splines of the form $F = (0, 0, \ldots, 0, f_{i+1}, \ldots, f_{n})$ where $f_{i+1}$ is non-zero. 
We  also  define $\mathcal{F}_{n} = \left\{\mathbf{0}\right\} =\mathbf{0} = (0,0,\ldots, 0)$, the zero spline.
\end{definition}

We now define the leading term of a spline to be its first non-zero entry.

\begin{definition}
Let $F \in \mathcal{F}_{i}$ where $i \neq n$, so that $F = (0, 0, \ldots, 0, f_{i+1}, \ldots, f_{n})$ and $f_{i+1} \ne 0$.  $L(F) = f_{i+1}$ is the \textbf{leading term} of $F$.
\end{definition}

\begin{definition}
A spline $B \in \mathcal{F}_i$ is a \textbf{minimal element} of $\mathcal{F}_i$ if $L(B)>0$, and for all $F \in \mathcal{F}_k$,  $(L(B) \leq  |L(F)|$.
\end{definition}

For example, $\mathbf{1} = (1, 1, \ldots, 1)$ has leading term $1$, which is minimal in $\N$.  Thus, $\mathbf{1}$ is a minimal element of $\mathcal{F}_{0}$.

{\textbf{Note:}} Although minimal elements of $\mathcal{F}_i$ have the same leading term, the other terms are not in general unique. 

Our strategy for constructing a basis for $S_G$ is to find a minimal element from each $\mathcal{F}_i$, for $i < n$.  The theorem below asserts that this set will alway form a basis, called a {\textbf{flow-up class basis}}.  

\begin{theorem}{\cite{liu}}
Let $(G,A)$ be an edge labeled graph with $n$ vertices. Let $\mathcal{B} = \left\{B_0, B_1,\dots, B_{n-1}\right\}\subset S_G$ where each $B_i$ is a minimal element of $\mathcal{F}_i$. Then $\mathcal{B}$ is a $\Z$-module basis for $S_G$. 
\end{theorem}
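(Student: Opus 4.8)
The plan is to show two things: that $\mathcal{B}$ spans $S_G$ over $\Z$, and that $\mathcal{B}$ is $\Z$-linearly independent. Linear independence is the easy half. Suppose $\sum_{i=0}^{n-1} c_i B_i = \mathbf{0}$ with $c_i \in \Z$ not all zero, and let $k$ be the smallest index with $c_k \neq 0$. Each $B_i$ with $i \geq k$ has its first $k$ entries equal to zero, and $B_k$ has nonzero $(k+1)$st entry $L(B_k)$, while $B_i$ for $i > k$ has $(k+1)$st entry equal to zero. Reading off the $(k+1)$st coordinate of the relation gives $c_k L(B_k) = 0$, forcing $c_k = 0$, a contradiction. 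So $\mathcal{B}$ is independent.

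For spanning, I would argue by a descending induction / minimal-counterexample argument on the leading position. Let $F \in S_G$ be arbitrary; I claim $F \in \mathrm{span}_\Z(\mathcal{B})$. If $F = \mathbf{0}$ we are done. Otherwise $F \in \mathcal{F}_i$ for some $i < n$, with leading term $L(F)$ in position $i+1$. The key divisibility claim is that $L(B_i) \mid L(F)$: indeed, the set of leading terms $\{\,L(H) : H \in \mathcal{F}_i\,\}$ together with $0$ forms an ideal of $\Z$ — it is closed under $\Z$-scaling since $c H \in \mathcal{F}_i \cup \{\mathbf{0}\}$ for $c \in \Z$, and closed under addition because if $H, H' \in \mathcal{F}_i$ have leading terms summing to something nonzero then $H + H'$ is again in $\mathcal{F}_i$ with that leading term (and if they sum to zero in position $i+1$, the result lies in some $\mathcal{F}_j$ with $j > i$, contributing $0$ to the ideal). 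By minimality of $B_i$, its leading term $L(B_i)$ is the smallest positive element of this ideal, hence generates it; so $L(B_i) \mid L(F)$, say $L(F) = q\, L(B_i)$. Then $F - q B_i \in S_G$ is a spline whose first $i+1$ entries vanish, so it lies in $\mathcal{F}_j$ for some $j > i$, or is $\mathbf{0}$. Iterating (the index strictly increases each time, bounded by $n$), we eventually reach $\mathbf{0}$, expressing $F$ as a $\Z$-combination of elements of $\mathcal{B}$.

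The main obstacle is verifying carefully that $\{L(H) : H \in \mathcal{F}_i\} \cup \{0\}$ is genuinely an ideal of $\Z$ — in particular that $S_G$ is closed under the $\Z$-module operations (immediate from the defining congruences being preserved under $\Z$-linear combinations) and that subtracting off $q B_i$ does not disturb the vanishing of earlier coordinates (clear, since both $F$ and $B_i$ have zeros in positions $1,\dots,i$). Everything else is bookkeeping on coordinates. Since this statement is quoted from \cite{liu}, I would keep the write-up brief and refer there for details, but the argument above is self-contained.
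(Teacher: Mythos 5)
Your argument is correct, and in fact the paper gives no proof of this statement at all---it is imported from \cite{liu} as a black box---so there is nothing internal to compare against; your write-up supplies exactly the standard argument one would expect that reference to contain. The two halves are sound: the triangularity of the $B_i$ gives independence by reading off the $(k+1)$st coordinate, and the key observation for spanning---that $\{L(H) : H \in \mathcal{F}_i\} \cup \{0\}$ is an ideal of $\Z$ whose least positive element is $L(B_i)$ (here you are implicitly repairing the paper's typo in the definition of minimality, where ``$F \in \mathcal{F}_k$'' should read ``$F \in \mathcal{F}_i$''), so that $L(B_i) \mid L(F)$ and the reduction $F \mapsto F - qB_i$ strictly increases the flow-up index---is exactly what makes the descending induction terminate. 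The only thing worth saying explicitly is that the existence of a minimal element in each $\mathcal{F}_i$ is part of the theorem's hypothesis rather than something you must establish (for connected $G$ one checks each $\mathcal{F}_i$ is nonempty by placing a common multiple of all edge weights in positions $i+1,\dots,n$, and then well-ordering of $\N$ gives a minimal element); with that remark your proof is complete and self-contained.
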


We now look at the effect of collapsing operations on flow-up classes and minimal elements.  We begin with an example.

%%%%%%%%%%%% LOOK at this example again... %%%%%%%%%%%%%%%%%%%%
\begin{example}
Let $G$ be the $Y$-graph with vertices $v_1,v_2,v_3,v_4$, and edge labels $a$, $b$, $c$, as in Figure \ref{fig:starclique}.  The star-clique operation on $v_{4}$ yields a $3$-cycle $G'$ with edge labels $(a,b)$, $(b,c)$, and $(a,c)$.  

Let  $(v_1,v_2,v_3,v_4) \in S_G$. The defining equations on $G$ are 
\begin{align}
\label{eqn:yd}
%\left\{
\begin{array}{rcl}
v_4     &\equiv &v_1 \bmod c\\
v_4   &\equiv &v_2 \bmod a\\ 
v_4     &\equiv &v_3 \bmod b
\end{array}
%\right.
\end{align}

By the Theorem \ref{theorem:CRT}, Equation \ref{eqn:yd} has a solution  if and only if 
\begin{align}
\label{eqn:yd2}
%\left\{
\begin{array}{rcl}
v_1 &\equiv &v_2 \mod (a,c)\\
v_2 &\equiv &v_3 \mod (a,b)\\ 
v_3 &\equiv &v_1 \mod (b,c)
\end{array}
%\right.
\end{align}

Note that these are in fact the defining equations of $S_ {G'}$.  Thus the projection map 
$$\phi:S_G \rightarrow S_{G'}$$ defined by  $\phi(v_1,v_2,v_3,v_4) = (v_1,v_2,v_3)$ is well defined, due to the forward direction of Theorem \ref{theorem:CRT}, and surjective by the reverse direction.  Since it's a projection map, it is also an $\Z$-module homomorphism.   Let $K$ be the kernel of $\phi$.  Then $\mathbf{g}$ is in  $K = \phi^{-1}(\mathbf{0}) $ if an only if $\mathbf{g} = (0,0,0,v_4)$ in $S_G$.  Substituting these values into (1), we get 
\begin{align}
\label{eqn:yd3}
%\left\{
\begin{array}{rcl}
v_4     &\equiv &0 \bmod a\\
v_{4}   &\equiv &0 \bmod b\\ 
v_4     &\equiv &0 \bmod c
\end{array}
%\right.
\end{align}

which have as solutions $g_4 \equiv 0 \bmod [a,b,c]$.  Thus $\mathbf{g} = (0,0,0,[a,b,c])$ is a minimal element of $\mathcal{F}_3$ and forms a basis for $K$.
\end{example}

We can formalize this process for any graph.

\begin{theorem}
\label{theorem:surjective}
Let $(G,A)$ be an edge-weighted simple graph on vertices $\{v_1,\ldots, v_n\}$ and let $(G_n,A_n)$ be the result of a star-clique operation on vertex $v_n$.  The map $\phi:S_G \rightarrow S_{G_n}$ defined by $\phi(g_1,g_2,\ldots, g_n) = (g_1,g_2,\ldots ,g_{n-1})$ is a surjective $\Z$-module homomorphism with kernel  $\mathcal{F}_{n-1}\cup {\mathcal{F}}_{n} $.
\end{theorem}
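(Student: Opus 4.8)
The plan is to generalize the argument from the worked $Y$-$\Delta$ example to an arbitrary star-clique operation at $v_n$. Write $d$ for the degree of $v_n$, let $v_{i_1},\ldots,v_{i_d}$ be its neighbors in $G$, and let $a_1,\ldots,a_d$ be the corresponding incident edge weights. The defining equations of $S_G$ split into two groups: the $d$ congruences $g_n \equiv g_{i_k} \bmod a_k$ coming from $st(v_n)$, and the remaining congruences coming from edges of $G$ not incident to $v_n$. The latter are exactly the defining equations of $S_{G_n}$ that come from the ``old'' edges; the ``new'' edges of $G_n$ are the clique edges $v_{i_k}v_{i_l}$ with weight $(a_k,a_l)$, whose defining equations are $g_{i_k} \equiv g_{i_l} \bmod (a_k,a_l)$. (If $v_{i_k}$ and $v_{i_l}$ were already adjacent, $G_n$ acquires a multiple edge, but that only adds congruences already implied; this needs a one-line remark, or one can simply allow multigraphs here and collapse later.)

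First I would check $\phi$ is well defined: given $\mathbf{g}\in S_G$, its image $(g_1,\ldots,g_{n-1})$ must satisfy the defining equations of $S_{G_n}$. The old-edge equations hold because they already hold in $G$ and don't involve $g_n$. For each new clique edge, apply the forward direction of the Non-Coprime CRT (Theorem~\ref{theorem:CRT}) to the pair of congruences $g_{i_k}\equiv g_n \bmod a_k$ and $g_{i_l}\equiv g_n \bmod a_l$: since they have the common solution $x=g_n$, we get $g_{i_k}\equiv g_{i_l} \bmod (a_k,a_l)$, which is precisely the new defining equation. That $\phi$ is a $\Z$-module homomorphism is immediate since it is the restriction of a coordinate projection $\Z^n\to\Z^{n-1}$.

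Next, surjectivity: given $(h_1,\ldots,h_{n-1})\in S_{G_n}$, I must produce $h_n\in\Z$ with $(h_1,\ldots,h_{n-1},h_n)\in S_G$. The only constraints on $h_n$ are the star congruences $h_n \equiv h_{i_k} \bmod a_k$ for $k=1,\ldots,d$. By the reverse direction of Theorem~\ref{theorem:CRT}, such $h_n$ exists iff $h_{i_k}\equiv h_{i_l}\bmod (a_k,a_l)$ for all $k,l$ — and these hold because $(h_1,\ldots,h_{n-1})\in S_{G_n}$ satisfies the clique-edge equations. Adding $h_n$ back changes none of the old equations, so the extension is a genuine spline, proving $\phi$ surjective.

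Finally, the kernel. If $\phi(\mathbf g)=\mathbf 0$ then $g_1=\cdots=g_{n-1}=0$, so $\mathbf g=(0,\ldots,0,g_n)$, which lies in $\mathcal F_{n-1}$ if $g_n\neq 0$ and in $\mathcal F_n=\{\mathbf 0\}$ if $g_n=0$; conversely any such tuple that is a spline has zero image, and any element of $\mathcal F_{n-1}\cup\mathcal F_n$ has exactly this shape. So $\ker\phi = (\mathcal F_{n-1}\cup\mathcal F_n)\cap S_G = \mathcal F_{n-1}\cup\mathcal F_n$ (the flow-up classes are defined as subsets of $S_G$ already). I expect the only real subtlety — the ``main obstacle'' — to be the careful bookkeeping when two neighbors of $v_n$ are already adjacent in $G$ (so $G_n$ is a multigraph), plus making sure the forward/reverse applications of the non-coprime CRT are stated for the right pair of congruences; everything else is routine translation between defining equations and the CRT.
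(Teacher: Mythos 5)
Your proposal is correct and follows essentially the same route as the paper's own proof: separate the star congruences at $v_n$ from the rest, use the forward direction of the non-coprime CRT for well-definedness, the reverse direction for surjectivity, and read off the kernel as the tuples $(0,\ldots,0,g_n)$. Your added remark about multiple edges arising when two neighbors of $v_n$ are already adjacent is a small point of care the paper glosses over, but it does not change the argument.
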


\begin{proof}
The first thing to note is that the only edges affected by a star-clique operation are those incident to vertex $v$, say $v_{i_1},\ldots ,v_{i_d}$. Thus the defining equations for $S_G$ and $S_{G_n}$ only differ in that $S_G$ includes equations 
\begin{align*}
 g_n  &\equiv g_{i_1}\bmod a_{i_1}\\
 g_n  &\equiv g_{i_2} \bmod a_{i_2}\\
    &\vdots \\
 g_n  &\equiv g_{i_d} \bmod a_{i_d}
\end{align*}
whereas $S_{G_n}$ includes the equations 
$$g_{i_j}  \equiv g_{i_k} \bmod (a_{i_j},a_{i_k}), \text{ for all }  1 \leq j \leq d$$
By Theorem \ref{theorem:CRT}, the first set of equations implies that second, hence $\phi$ is well defined.  Also by Theorem \ref{theorem:CRT}, the second set of equations implies there exists a $g_n$ satisfying the first set, and since the remaining equations don't involve $g_n$, we have shown there exists a spline in $S_G$ mapping to a given spline in $S_{G_n}$.  

Now let $K$ denote the kernel of $\phi$.  Then $\mathbf{g} \in K = \phi^{-1}(\mathbf{0}) $ if and only if $\mathbf{g} = (0,\ldots,0,g_n)$ in $S_G$.  If $g_n \neq 0$, then $\mathbf{g}$ has exactly $n$ zeroes, hence is an element of $ {\mathcal{F}}_{n-1}$.  If $g_n = 0$, then $\mathbf{g} = \mathbf{0}$, the only element of $ {\mathcal{F}}_{n}$.
\end{proof}

\begin{theorem}
Let$(G,A)$ be an edge weighted graph and $(G',A')$ be the result of an edge collapse operation on multiple edges $e_1,e_2,\ldots, e_r$. Then $S_{G'} = S_{G}$.
\end{theorem}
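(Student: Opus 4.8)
The plan is to observe that an edge collapse leaves the vertex set untouched, so with the fixed vertex ordering $v_1,\dots,v_n$ both $S_G$ and $S_{G'}$ are subsets of the same ambient module $\Z^n$, and the statement $S_{G'}=S_G$ is literally an equality of subsets. It therefore suffices to show that the defining equations cut out the same solution set. The only edges that differ between $G$ and $G'$ are the multiple edges $e_1,\dots,e_r$ joining $v$ and $w$, with weights $a_1,\dots,a_r$, which are replaced in $G'$ by a single edge $e$ joining $v$ and $w$ with weight $[a_1,\dots,a_r]$. All other edges, and hence all other defining equations, are identical for the two graphs.

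So the crux is to show, for $\mathbf g=(g_1,\dots,g_n)\in\Z^n$, that
\begin{align*}
g_v &\equiv g_w \bmod a_i \quad\text{for all } 1\le i\le r
\end{align*}
holds if and only if $g_v\equiv g_w\bmod [a_1,\dots,a_r]$. This is exactly Lemma \ref{lemma:gcd}(4) when $r=2$ (with $x=g_v$, $a=g_w$, $m=a_1$, $n=a_2$). For general $r$ I would induct on $r$, using Lemma \ref{lemma:gcd}(4) to merge the last two congruences $g_v\equiv g_w\bmod a_{r-1}$ and $g_v\equiv g_w\bmod a_r$ into $g_v\equiv g_w\bmod [a_{r-1},a_r]$, and then Lemma \ref{lemma:gcd}(2), which gives $[a_1,\dots,a_{r-2},[a_{r-1},a_r]]=[a_1,\dots,a_r]$, to keep the iterated LCM in the required form. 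This establishes the equivalence of the two systems of congruences.

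Combining: a tuple $\mathbf g\in\Z^n$ satisfies the defining equations of $S_G$ exactly when it satisfies the common edges' equations together with $\{g_v\equiv g_w\bmod a_i\}_{i=1}^r$, which by the previous paragraph is equivalent to satisfying the common edges' equations together with $g_v\equiv g_w\bmod[a_1,\dots,a_r]$, i.e.\ the defining equations of $S_{G'}$. Hence $S_G$ and $S_{G'}$ have the same elements, and since the two modules also inherit their $\Z$-module structure from $\Z^n$, they are equal as $\Z$-modules.

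I do not expect any real obstacle here; the only point requiring a word of care is that Lemma \ref{lemma:gcd}(4) is stated for two moduli, so the $r$-fold version needs the short induction above (equivalently, one can invoke Theorem \ref{theorem:CRT} with all $a_i$ equal, for which the compatibility conditions $a_i\equiv a_j\bmod(m_i,m_j)$ are automatic, and read off that the solution set is a single residue class modulo $[a_1,\dots,a_r]$).
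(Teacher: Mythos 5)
Your proof is correct and follows essentially the same route as the paper: both reduce the claim to the observation that the only defining equations that change are the $r$ congruences $g_v \equiv g_w \bmod a_i$, replaced by the single congruence $g_v \equiv g_w \bmod [a_1,\ldots,a_r]$, and both invoke Lemma \ref{lemma:gcd} to see these have the same solution set. Your only addition is spelling out the short induction needed to extend Lemma \ref{lemma:gcd}(4) from two moduli to $r$ moduli, a detail the paper leaves implicit.
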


\begin{proof}
Suppose that edges $e_1,e_2,\ldots, e_r$ each connect vertices $v$ and $w$, and have edge labels $a_1,a_2,\ldots, a_r$. The label for the new edge $e$ in $G',A'$ is $[a_1,a_2,\ldots, a_r]$. Thus the only difference between the defining equations for $S_G$ and $S_{G'}$ are the $r$ equations 
$g_v \equiv g_w \bmod a_i$ and the single equation $g_v \equiv g_w \bmod [a_1,a_2,\ldots, a_r]$ in $S_{G_e}$.  By Lemma \ref{lemma:gcd}, these two sets of equations have the same solutions, hence $S_{G'} = S_{G}$.
\end{proof}

In other words, collapsing multiple edges changes the graph, but not the corresponding spline module.  Putting these theorems together we now state our main result of this section.

%%%%%%%%%%%%%%%% theorem reduction map %%%%%%%%%%%%%%%%%%%%%

\begin{theorem}
\label{theorem:reductionmap}
Let $(G',A')$ be the result of a sequence of collapsing operations on an edge-weighted graph $(G,A)$ resulting in the removal of vertices $v_{r+1},\dots v_n$ and the collapsing of any multiple edges.  Then the map $\phi:S_G \rightarrow S_{G'}$ defined by $\phi(g_1,g_2,\ldots, g_n) = (g_1,g_2,\ldots, g_r)$ is a surjective $\Z$-module homomorphism with kernel $\mathcal{F}_{r}\cup \cdots \cup \mathcal{F}_{n-1} \cup \mathcal{F}_{n}  $.
\end{theorem}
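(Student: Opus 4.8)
The plan is to obtain Theorem~\ref{theorem:reductionmap} by induction on the length of the collapsing sequence, using the two single-step results (Theorem~\ref{theorem:surjective} for a star-clique operation and the preceding theorem, $S_{G'}=S_G$, for an edge collapse) as the base cases, together with the elementary fact that a composition of surjective $\Z$-module homomorphisms is again a surjective $\Z$-module homomorphism. The only subtlety is bookkeeping: each star-clique step removes exactly one vertex, and the vertex ordering must be arranged so that the removed vertices are precisely $v_{r+1},\dots,v_n$ and are peeled off in the order $v_n, v_{n-1},\dots,v_{r+1}$; then the projection at each stage is $(g_1,\dots,g_k)\mapsto(g_1,\dots,g_{k-1})$, and the composite of all of them is the stated map $\phi(g_1,\dots,g_n)=(g_1,\dots,g_r)$.

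First I would set up the induction. A single collapsing operation is either (i) an edge collapse, which by the previous theorem leaves the spline module literally unchanged and the associated map is the identity, or (ii) a star-clique operation on the last-indexed vertex, which by Theorem~\ref{theorem:surjective} gives a surjective $\Z$-module homomorphism $\phi_k:S_{G^{(k)}}\to S_{G^{(k)}_k}$, $\phi_k(g_1,\dots,g_k)=(g_1,\dots,g_{k-1})$, with kernel $\mathcal F_{k-1}\cup\mathcal F_k$. A sequence of operations producing $(G',A')$ from $(G,A)$ is a composition $\phi=\psi_m\circ\cdots\circ\psi_1$ of such maps; edge collapses contribute identity factors, and the $n-r$ star-clique factors contribute the successive projections stripping coordinates $n, n-1,\dots, r+1$. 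Since each $\psi_i$ is a surjective $\Z$-module homomorphism, so is $\phi$, and it has the claimed coordinate form $\phi(g_1,\dots,g_n)=(g_1,\dots,g_r)$.

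Next I would identify the kernel. A tuple $\mathbf g=(g_1,\dots,g_n)\in S_G$ lies in $\ker\phi$ exactly when its first $r$ coordinates vanish, i.e.\ $\mathbf g=(0,\dots,0,g_{r+1},\dots,g_n)$. Classifying such a tuple by the position of its first nonzero entry shows $\ker\phi=\mathcal F_r\cup\mathcal F_{r+1}\cup\cdots\cup\mathcal F_{n-1}\cup\mathcal F_n$ (with $\mathcal F_n=\{\mathbf 0\}$ handling the all-zero case). One does have to check that every $\mathbf g$ of this form actually is a spline on $G$ and not merely on $G'$ — but that is immediate from the inductive description of the kernels: by Theorem~\ref{theorem:surjective} at the final star-clique step the kernel is contained in $S_G$, and the fact that the earlier factors are surjective lets one realize any such partial tuple, so membership in $S_G$ is automatic. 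Alternatively, and more cleanly, one can simply observe that $\ker\phi$ is by definition a subset of $S_G$, so the only content is the set-theoretic identification above, which is the flow-up-class bookkeeping just described.

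The main obstacle — really the only one — is the indexing discipline: making sure the vertex labels are chosen once and for all so that the operations remove exactly $v_{r+1},\dots,v_n$ in decreasing order of index, so that each intermediate projection drops the current top coordinate and the composite is the single projection onto the first $r$ coordinates. Once that is fixed, everything reduces to ``composition of surjections is a surjection'' plus the single-step kernel computation from Theorem~\ref{theorem:surjective}, and the proof is short. I would state the indexing convention explicitly at the start of the proof to avoid any ambiguity, then run the two-line induction.
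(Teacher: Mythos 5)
Your proposal is correct and follows essentially the same route as the paper: compose the single-step surjections from Theorem~\ref{theorem:surjective} (with edge collapses contributing identity maps), conclude surjectivity of the composite, and then identify the kernel as the set of splines whose first $r$ coordinates vanish, which is exactly $\mathcal{F}_{r}\cup\cdots\cup\mathcal{F}_{n}$. The extra care you take with the indexing convention and with noting that $\ker\phi\subseteq S_G$ by definition is sound but does not change the argument.
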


\begin{proof}
By Theorem \ref{theorem:surjective}, each time we do a star-clique operation, we have a surjective $\Z$- module homomorphism $\phi:S_G \rightarrow S_{G_v}$, and each time we do an edge collapse, the module stays the same.  Since $\phi$ is a composition of finitely many surjective $\Z$-module homomorphisms, it is also a surjective $\Z$-module homomorphism.

\

Let $K$ denote the kernel of $\phi$.  Then $\mathbf{g} \in K = \phi^{-1}(\mathbf{0}) $ if and only if $\mathbf{g} = (0,\ldots,0,g_{r+1},\ldots, g_n)$ in $S_G$.  Since $\mathbf{g}$ has at least $r$ zeroes, it is an element of $ \mathcal{F}_{i}$ for some $i \geq r$.
\end{proof}

\begin{corollary}

Let $(G,A)$ be an edge-weighted graph on $n$ vertices and $v_i$ and $v_j$ be distinct vertices of $G$.  The map $\phi:S_G \rightarrow S_{K_2}$ defined by $\phi(g_1,g_2,\ldots, g_n) = (g_i,g_j)$ is  a surjective $\Z$-module homomorphism.
\end{corollary}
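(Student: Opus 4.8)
The plan is to deduce this corollary directly from Theorem \ref{theorem:reductionmap} together with part (1) of the collapsing lemma (Lemma \ref{theorem:reducetok2}), reducing everything to a bookkeeping argument about vertex orderings. First I would invoke Lemma \ref{theorem:reducetok2}(1) with the pair $(v_i, v_j)$ to obtain a sequence of collapsing operations taking $(G,A)$ to $(K_2, A_*)$ whose two surviving vertices are exactly $v_i$ and $v_j$. The only mismatch with the statement of Theorem \ref{theorem:reductionmap} is cosmetic: that theorem is phrased for the case where the surviving vertices occupy the initial segment $v_1,\dots,v_r$ of the ordering, whereas here the survivors are $v_i$ and $v_j$, which need not be $v_1, v_2$. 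So the first real step is to observe that the labeling of vertices is arbitrary — nothing in the definition of $S_G$, the flow-up classes, or the collapsing operations depends on which integers we attach to the vertices — hence we may reorder so that $v_i$ and $v_j$ are the first two vertices, apply Theorem \ref{theorem:reductionmap} with $r = 2$, and then translate the conclusion back through the reordering.

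Concretely, the key steps in order would be: (1) fix the pair $v_i, v_j$ and choose an ordering of the $n$ vertices of $G$ in which $v_i$ and $v_j$ are listed first (say as the new $v_1, v_2$); (2) apply Lemma \ref{theorem:reducetok2}(1) to get a collapse sequence from $(G,A)$ down to $K_2$ on $\{v_i, v_j\}$, which in the reordered labeling is a collapse sequence removing the vertices now indexed $3,\dots,n$; (3) apply Theorem \ref{theorem:reductionmap} with this ordering and $r=2$ to conclude that the coordinate-projection onto the first two (reordered) entries is a surjective $\Z$-module homomorphism $S_G \to S_{K_2}$; (4) unwind the reordering to see that this projection is precisely the map $\phi(g_1,\dots,g_n) = (g_i, g_j)$, so $\phi$ is a surjective $\Z$-module homomorphism. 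One should also note that $S_{K_2}$ here denotes the spline module of $K_2$ with edge weight $A_* = [\,\cdots\,]$ produced by the collapse, which is the intended target.

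The one point deserving a sentence of care — and the closest thing to an obstacle — is justifying that relabeling vertices does not change anything, i.e.\ that $S_G$ and all the machinery of Section 3 are equivariant under permutations of the vertex set: a permutation $\sigma$ of $\{1,\dots,n\}$ induces an isomorphism $S_G \to S_{\sigma G}$ by permuting coordinates, and the star-clique and edge-collapse operations commute with this relabeling. This is genuinely routine, since the defining equations are symmetric in form, so I would state it in one line rather than belabor it. Everything else is immediate from the already-proved Theorem \ref{theorem:reductionmap} and Lemma \ref{theorem:reducetok2}.
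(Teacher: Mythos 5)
Your proposal is correct and follows essentially the same route as the paper: collapse $(G,A)$ down to $K_2$ on the vertices $v_i,v_j$ using the collapsing operations, then apply Theorem \ref{theorem:reductionmap} to conclude surjectivity. The only difference is that you make explicit the vertex-reordering step needed to put $v_i,v_j$ first before invoking that theorem, a point the paper's two-line proof leaves implicit.
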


\begin{proof}
We start with $(G,A)$ and delete vertices one by one until there are only two vertices left.  Each time we end up with multiple edges, we collapse them and replace $S_G$ with $S_{G_e}$. By Theorem \ref{theorem:reductionmap}, the result now follows. 
\end{proof}

%%%%%%%%%%%%basis section%%%%%%%%%%%%%%%%%
\section{A basis for  $S_G$}
%%%%%%%%%%%%%%%%%%%%%%%%%%%%%%%%%%%%%%%%%%%%
We now use the map $\phi$ to compare flow-up classes and minimal elements of $S_G$ and $S_{G'}$.

\begin{theorem}
\label{theorem:phitheorem} 
Let $(G',A')$ be the result of collapsing operations that remove vertices $v_{r+1},\ldots,v_n$.  Define $\phi: S_G \rightarrow S_{G'}$ by  $\phi(g_1,g_2,\ldots, g_n)=(g_1,g_2,\ldots, g_{r})$.   Suppose $i < r$, $F \in  {\mathcal{F}_{i}}$, and $K$ is the kernel of $\phi$. Then, 

\begin{enumerate}
\item $\phi({\mathcal{F}}) = {\mathcal{F}'_{i}}$
\item $L(F)=L(\phi(F))$.
\item $F$ is minimal if and only if $\phi(F)$ is minimal. 
\item Let $\mathcal{B}'$ and $\mathcal{K}$ be flow-up class bases for $S_{G'}$ and $K$, respectively.  Let $\mathcal{B}$ consist of a pre-image of each element of $\mathcal{B}'$.   Then $\mathcal{B}\cup\mathcal{K}$ is a flow-up class basis of $S_G$.
\item As $R$-modules, $S_G \cong S_{G'} \oplus K$.
\end{enumerate}
\end{theorem}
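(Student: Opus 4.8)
The plan is to prove the five parts in order, since each builds on the previous. Throughout, write $\mathbf{g} = (g_1,\ldots,g_n)$ for an element of $\Z^n$, and use $\mathcal F_i'$ for the $i$th flow-up class of $S_{G'}$. The key structural fact, already established, is that $\phi$ is a surjective $\Z$-module homomorphism that simply forgets the last $n-r$ coordinates, and that for $i < r$ the first $r$ coordinates of an element of $\mathcal F_i$ already ``see'' the nonzero leading entry.

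\medskip

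For part (1), I would argue both inclusions. If $F = (0,\ldots,0,f_{i+1},\ldots,f_n) \in \mathcal F_i$ with $i < r$ and $f_{i+1}\neq 0$, then $\phi(F) = (0,\ldots,0,f_{i+1},\ldots,f_r)$ has its first $i$ entries zero and $f_{i+1}\neq 0$, so $\phi(F)\in\mathcal F_i'$; this gives $\phi(\mathcal F_i)\subseteq\mathcal F_i'$. Conversely, given $F' = (0,\ldots,0,f_{i+1},\ldots,f_r)\in\mathcal F_i'$, surjectivity of $\phi$ supplies some $\mathbf{g}\in S_G$ with $\phi(\mathbf{g}) = F'$; since $i<r$, the first $i$ coordinates of $\mathbf{g}$ agree with those of $F'$ and hence vanish, while the $(i{+}1)$st coordinate is $f_{i+1}\neq 0$, so $\mathbf{g}\in\mathcal F_i$ and $F' \in \phi(\mathcal F_i)$. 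Part (2) is then immediate: for $F\in\mathcal F_i$ with $i<r$, $L(F) = f_{i+1}$ is the $(i{+}1)$st coordinate, and this coordinate is preserved by $\phi$, so $L(\phi(F)) = f_{i+1} = L(F)$.

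\medskip

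Part (3) follows by combining (1) and (2) with the definition of minimality. A minimal element of $\mathcal F_i$ is one whose leading term is positive and of smallest absolute value among all leading terms occurring in $\mathcal F_i$. By (1), $\phi$ restricts to a surjection $\mathcal F_i\to\mathcal F_i'$, and by (2) it preserves leading terms, so the set of leading-term values appearing in $\mathcal F_i$ equals that appearing in $\mathcal F_i'$; hence $L(F)$ is minimal positive among these values if and only if $L(\phi(F))$ is, which is exactly the claim. For part (4): the set $\mathcal B'$ consists of one minimal element of each nonzero $\mathcal F_i'$ for $i<r$, so by (3) each chosen pre-image in $\mathcal B$ is a minimal element of the corresponding $\mathcal F_i$ in $S_G$; meanwhile, by Theorem \ref{theorem:reductionmap}, $K = \mathcal F_r\cup\cdots\cup\mathcal F_n$, and $\mathcal K$ is a flow-up class basis of $K$, i.e.\ a minimal element of each nonzero $\mathcal F_i$ for $r\le i<n$. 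Thus $\mathcal B\cup\mathcal K$ contains exactly one minimal element of $\mathcal F_i$ for every $i$ with $0\le i<n$, and the citation \cite{liu} (the theorem stating that any such collection is a $\Z$-module basis) finishes it. The one point requiring care here is the claim that a flow-up class basis of the submodule $K$ really does consist of minimal elements of the $\mathcal F_i$ of the \emph{ambient} module $S_G$ for $i\ge r$ — this holds because for $i\ge r$ every element of $\mathcal F_i\subseteq S_G$ automatically lies in $K$ (its first $r\ge$ first coordinates vanish), so $\mathcal F_i$ computed in $K$ and in $S_G$ coincide.

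\medskip

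Finally, part (5) is the algebraic payoff and the main thing to get right. I would exhibit the splitting explicitly rather than invoking abstract nonsense. Since $\phi:S_G\to S_{G'}$ is surjective with kernel $K$, there is a short exact sequence $0\to K\to S_G\xrightarrow{\phi} S_{G'}\to 0$; to conclude $S_G\cong S_{G'}\oplus K$ it suffices to produce a $\Z$-module splitting $\psi:S_{G'}\to S_G$ with $\phi\circ\psi = \mathrm{id}$. One clean way: use part (4) — the basis $\mathcal B\cup\mathcal K$ — and define $\psi$ on the basis $\mathcal B'$ of $S_{G'}$ by sending each $B'\in\mathcal B'$ to its chosen pre-image $B\in\mathcal B$, extended $\Z$-linearly; then $\phi\circ\psi$ fixes the basis $\mathcal B'$ hence is the identity, and $S_G = \psi(S_{G'})\oplus K$ because $\mathcal B\cup\mathcal K$ is a basis with $\mathcal B$ spanning $\psi(S_{G'})$ and $\mathcal K$ spanning $K$. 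Alternatively, since $S_{G'}$ is free (it is a spline module over the PID $\Z$, so free of rank $r$), any surjection onto it splits, giving the result with no reference to (4) at all; I would mention this as the slicker route. The only mild obstacle is bookkeeping — making sure the indices $i<r$ versus $i\ge r$ are handled consistently and that ``pre-image of a flow-up class basis'' is well-defined (it is, once an ordering of vertices and a choice of pre-images is fixed).
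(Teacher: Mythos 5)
Your proposal is correct and follows essentially the same route as the paper's proof: project to get (1) and (2), transfer minimality via preserved leading terms for (3), assemble one minimal element per flow-up class using Theorem \ref{theorem:reductionmap} for (4), and deduce the splitting in (5). You fill in a few details the paper leaves implicit (the reverse inclusion in (1), the agreement of flow-up classes of $K$ with those of $S_G$ for $i \ge r$, and the explicit section $\psi$), but the underlying argument is the same.
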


\begin{proof}

\ \\
\begin{enumerate}
\item  If $F = (0,\ldots,0,f_{i+1},\ldots, f_n)$ has $i$ leading zeroes, then $\phi(F) = (0,\ldots,0,f_{i+1},\dots f_r)$ also has $i$ leading zeroes.  
\item  $L(F) = f_{i+1} = L(\phi(F))$.
\item  If $F$ is minimal but $\phi(F)$ is not, then there is some $M'$ in ${\mathcal{F}'_{i}}$  such that $|L(M')| < |L(\phi{F})|$.  Let $M$ be in the inverse image of $M'$.  By (2), $L(M) = L(M')$, so  $|L(M)| < |L(\phi{F})|$, which contradicts the minimality of $F$. Conversely, suppose $F' = \phi(F)$ is minimal but $F$ is not.  Then there exists $M$ in ${\mathcal{F}_{i}}$ such that $|L(M)| < |L({F})|$. Then $L(M)=L(\phi(M))$, which implies $|L(\phi(M)| < |L({F'})|$, contradicting the minimality of $F'$.

\item Since $\mathcal{B}'$ and $\mathcal{K}$ are flow-up class bases, their leading terms are minimal in $S_{G'}$ and $K$, respectively.  Let $\mathcal{B}$ be a set of inverse images of elements of $\mathcal{B}'$.  By (3), these elements will minimal and there will be one from each flow-up class $\mathcal{F}_{0}, \ldots , \mathcal{F}_{r-1} $.  To complete a basis for $S_G$, we need a minimal element from each remaining non-zero flow-up classes $\mathcal{F}_{r}, \ldots , \mathcal{F}_{n-1}$.  By Theorem \ref{theorem:reductionmap}, $K=\mathcal{F}_{r}\cup \cdots \cup \mathcal{F}_{n-1} \cup \mathcal{F}_{n}$. Thus any flow-up class basis for $K$ will complete a basis of $S_G$, making  $\mathcal{B}\cup\mathcal{K}$ a flow-up class basis for $S_G$.
\item This follows from (4).
\end{enumerate}
\end{proof}

In fact, we can describe $S_G$ as the direct sum of rank 1 kernels of collapsing maps.  First, we define a complete collapse sequence of graphs.

\begin{definition} Let $(G,A)$ be a simple edge-weighted graph with $n$ vertices.
A {\textbf{complete collapse sequence}} is a set of graphs $\{G_n, \ldots, G_1\}$, where $G = G_{n}$ and for $2\leq i \leq n$,  $G_{i-1}$ is result of collapsing operations that remove the vertex $v_i$ from $G_{i}$.    
\end{definition}  

Note that each $G_i$ contains $i$ vertices and that $G_1$ is a graph consisting of the single vertex $v_1$.  

\begin{theorem} 
\label{thm:completecollapse}

Let $\{G_k\}$ be a complete collapse sequence of $(G,A)$.
Define $$S_{G_n} \xrightarrow{\phi_n} S_{G_{n-1}} \cdots \xrightarrow{\phi_3} S_{G_2}\xrightarrow{\phi_{2}} S_{G_1}\xrightarrow{\phi_{1}} {0}$$ 
by  $\phi_{i}(g_1,g_2,\ldots, g_{i})=(g_1,g_2,\ldots, g_{i-1})$ for $i>1$, and $\phi_1(g_1) = 0$.  
Let $K_i$ denote the kernel of $\phi_i$.  Then  

\begin{enumerate}
    \item $S_G \cong {K}_1 \oplus {K}_2 \oplus \cdots \oplus {{K}}_n$.
    \item $K_i$ is a rank 1 submodule of $S_{G_{i}}$.
    \item Let $st(v_{i})$ in $G_{i}$ have edge weights $\{c_1,\ldots, c_d\}$.  Then $m_1= (1)$ generates $K_1$ and and $m_i = (0,0,\ldots, [c_1,\ldots, c_d])$ generates $K_i$, for $i>1$.
    \item For each $i<n$, let $M_i$ in $S_G$ be a pre-image of $m_i$ under the map $\phi_n \circ \phi_{n-1} \cdots \circ \phi_{i+1}$, and let $M_n=m_n$. Then $\{M_1, \ldots M_n\}$ is a flow-up class basis for $S_G$ where $M_i \in {\mathcal{F}_{i-1}}$.
\end{enumerate}
 
\end{theorem}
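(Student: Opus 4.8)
The plan is to prove the four parts essentially in the order stated, bootstrapping each from the structural results of the previous section. For part (1), I would iterate Theorem~\ref{theorem:phitheorem}(5): applying it to the collapse $G_n \to G_{n-1}$ gives $S_{G_n} \cong S_{G_{n-1}} \oplus K_n$, and by induction $S_{G_{n-1}} \cong S_{G_{n-2}} \oplus K_{n-1} \cong \cdots \cong S_{G_1} \oplus K_2 \oplus \cdots \oplus K_n$. Finally $\phi_1 : S_{G_1} \to 0$ has kernel all of $S_{G_1}$, and since $G_1$ is a single vertex with no edges, $S_{G_1} = \Z \cong K_1$. Composing the isomorphisms yields $S_G \cong K_1 \oplus K_2 \oplus \cdots \oplus K_n$. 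I should be a little careful that the composite of the $\phi_i$'s restricted appropriately realizes this internal direct sum decomposition, but this is exactly what Theorem~\ref{theorem:phitheorem}(4)--(5) is set up to deliver.

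For parts (2) and (3): by Theorem~\ref{theorem:surjective} (applied to the graph $G_i$ and its vertex $v_i$, noting each $G_{i-1}$ is obtained from $G_i$ by a star-clique on $v_i$ followed by edge collapses, which don't change the module), the kernel $K_i$ consists of the splines on $G_i$ of the form $(0,\ldots,0,g_i)$, i.e.\ $K_i = \mathcal{F}_{i-1} \cup \mathcal{F}_i$ inside $S_{G_i}$. Substituting $(0,\ldots,0,g_i)$ into the defining equations of $G_i$, the only surviving constraints are $g_i \equiv 0 \bmod c_j$ for each edge weight $c_j$ in $st(v_i)$ (the edges not incident to $v_i$ impose $0 \equiv 0$), so by Lemma~\ref{lemma:gcd}(4), $g_i \equiv 0 \bmod [c_1,\ldots,c_d]$. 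Hence $K_i = \{(0,\ldots,0,k[c_1,\ldots,c_d]) : k \in \Z\}$, which is free of rank $1$ and generated by $m_i = (0,\ldots,0,[c_1,\ldots,c_d])$. For $i=1$ there are no edges, so $K_1 = S_{G_1} = \{(k) : k \in \Z\}$, generated by $m_1 = (1)$. Here I would flag that the edge weights $\{c_1,\ldots,c_d\}$ of $st(v_i)$ in $G_i$ are generally LCM/GCD combinations of the original weights of $G$ produced by the earlier collapses — this is the source of the ``least common multiples and greatest common divisors of a subset of the edge weights'' remark in the introduction, and it is worth making explicit, though it is not strictly needed for the statement as written.

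For part (4): this is a direct application of Theorem~\ref{theorem:phitheorem}(4) iterated down the collapse sequence. Take the flow-up class basis $\{m_1\}$ of $S_{G_1}$ (trivially $m_1 \in \mathcal{F}_0$), lift it through $\phi_2$ and adjoin $m_2$ to get a flow-up class basis $\{M_1^{(2)}, m_2\}$ of $S_{G_2}$ with $M_1^{(2)} \in \mathcal{F}_0$, $m_2 \in \mathcal{F}_1$; continue lifting through each $\phi_{i+1}$ and adjoining $m_{i+1}$, using Theorem~\ref{theorem:phitheorem}(3) to see that lifting preserves minimality and Theorem~\ref{theorem:phitheorem}(1) to see it preserves the flow-up class index. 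At the top we obtain $\{M_1,\ldots,M_n\}$ with $M_i \in \mathcal{F}_{i-1}$, each minimal in its class, so by the cited Theorem of \cite{liu} it is a $\Z$-module basis for $S_G$. The only bookkeeping point is that $M_i$ as defined in the statement (a pre-image of $m_i$ under $\phi_n \circ \cdots \circ \phi_{i+1}$) is indeed a legitimate choice at this stage, since that composite is surjective by Theorem~\ref{theorem:reductionmap} and its kernel is $\mathcal{F}_i \cup \cdots \cup \mathcal{F}_n$, so any pre-image of $m_i \in \mathcal{F}_{i-1} \subset S_{G_i}$ lies in $\mathcal{F}_{i-1} \subset S_G$ and remains minimal.

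I expect the main obstacle to be purely organizational rather than mathematical: carefully aligning the internal direct sum in part (1) with the ``pre-image'' construction in part (4) so that the $M_i$ genuinely form an independent spanning set and not merely a basis up to the abstract isomorphism. Concretely, one must check that a pre-image of $m_i$ can be chosen with all entries in positions $1,\ldots,i-1$ equal to zero (possible because $m_i$ has its first $i-1$ entries zero and the lifting maps are ``projection'' maps that only add coordinates), so that $M_i \in \mathcal{F}_{i-1}$ on the nose; once that is pinned down, minimality and the basis conclusion follow formally from the already-established Theorems~\ref{theorem:phitheorem} and the \cite{liu} criterion, and everything else is the routine CRT computation of $K_i$.
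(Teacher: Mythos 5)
Your proposal is correct and follows essentially the same route as the paper: the telescoping application of Theorem~\ref{theorem:phitheorem}(5) for the direct sum, the explicit computation of $K_i$ as $\{(0,\ldots,0,k[c_1,\ldots,c_d])\}$ via the defining equations and Lemma~\ref{lemma:gcd}, and the lifting of the minimal generators $m_i$ through the projection maps to obtain the flow-up class basis. The one point you flag as needing care --- choosing a pre-image of $m_i$ with its first $i-1$ entries zero --- is in fact automatic, since any pre-image under the composite projection agrees with $m_i$ in its first $i$ coordinates.
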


\begin{proof}
By Theorem \ref{theorem:phitheorem}, for all $i>1$ $S_{G_{i}} \cong S_{G_{i-1}} \oplus \mathcal{K}_{i}$.  When $i=n$ and $n-1$, we have $S_{G} \cong S_{G_{n-1}} \oplus \mathcal{K}_{n}$ and $S_{G_{n-1}}  \cong S_{G_{n-2}} \oplus \mathcal{K}_{n-1}$.  Putting these together we have $$S_G = S_{G_{n-2}} \oplus \mathcal{K}_{n} \oplus \mathcal{K}_{n-1}$$  We can continue in this manner, down to $i=2$, to get $S_G \cong \mathcal{K}_n \oplus \mathcal{K}_{n-1} \oplus \cdots \oplus \mathcal{K}_{2} \oplus S_{G_1}$. The last map $\phi_1: S_{G_1} \rightarrow 0$ is the zero map, so $K_1 = S_{G_1}$, giving us the first result.  Since $G_1$ has no edges, and hence no defining equations,  $ S_{G_1}= R$.  Thus  $K_1 = R$ , which is generated by the element (1).  For $i>1$, elements of $K_i$ have form $(0,0,\ldots,0,g_{i})$.  Thus $K_i$ is isomorphic to a non-zero submodule of $R$,  hence $K_i$ has rank 1.  Elements of $K_i$  correspond to vertex labels $g_{i}$ for $v_{i}$ and 0 for all other vertices, so the defining equations  are $g_{i} \equiv 0 \bmod A(e_j)$ for each edge $e_j$ incident to $v_{i}$,  and  $0 \equiv 0 \bmod A(e)$ for all other edges.  The incident edges are precisely the edges in $st(v_{i})$, which have weights $\{c_1,\ldots, c_d\}$.  Then $(0,0,\ldots,0,g_{i}) \in K_i$ if and only if  
$g_{i} \equiv 0 \bmod c_j$ for all $j$.  But by Lemma \ref{lemma:gcd}, this is equivalent to $g_{i} \equiv 0 \bmod [c_1,\ldots, c_d]$.  So $m_i = (0,0,\ldots, [c_1,\ldots, c_d])$ is a minimal element of $K_i$, which means $m_i$  generates $K_i$.

Finally, since $m_i$ is a minimal element of of the $i-1$st flow-up class of $S_{G_{i}}$, any pre-image $M_i$ in $S_G$ will be a minimal element of ${\mathcal{F}}_{i-1}$, by Theorem \ref{theorem:phitheorem}. Thus the $M_i$'s form a flow-up class basis of $S_G$.  
\end{proof}

This gives a procedure for constructing a basis $\{M_1, \ldots M_n\}$ for $S_G$.  For $i>1$,  $M_i$ has leading term $[c_1,\ldots, c_d]$ where $\{c_1,\ldots, c_d\}$ are weights of edges incident to vertex $v_{i}$ in $G_{i}$, and we can use the Chinese Remainder Theorem to explicitly construct the other terms of  $M_i$. When $i=1$, $G_1$ is a single vertex with no edges.  Since there are no restrictions, $m_1=(1)$, and it's trivial to see that $M_1=(1,\ldots,1)$ is always a spline in $S_G$.

\begin{figure}
\begin{center}
\begin{tabular}{cc}
\includegraphics[scale=0.3]{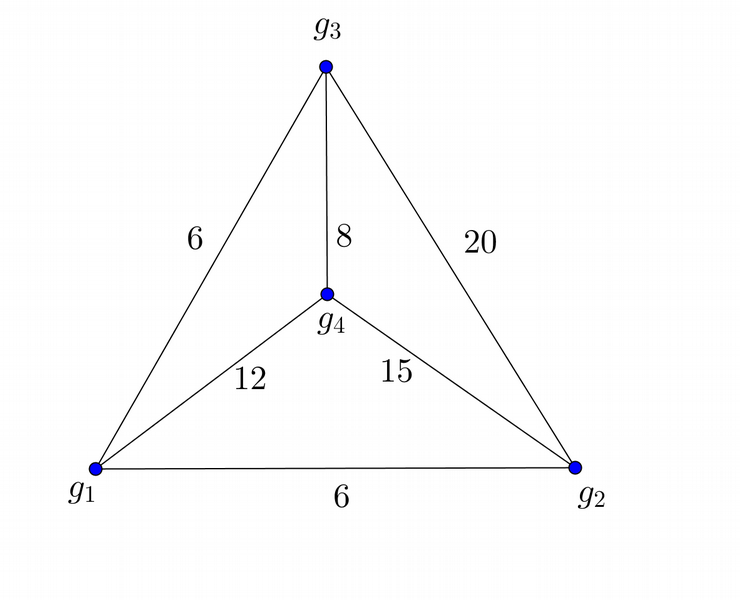}
    &\includegraphics[scale=0.3]{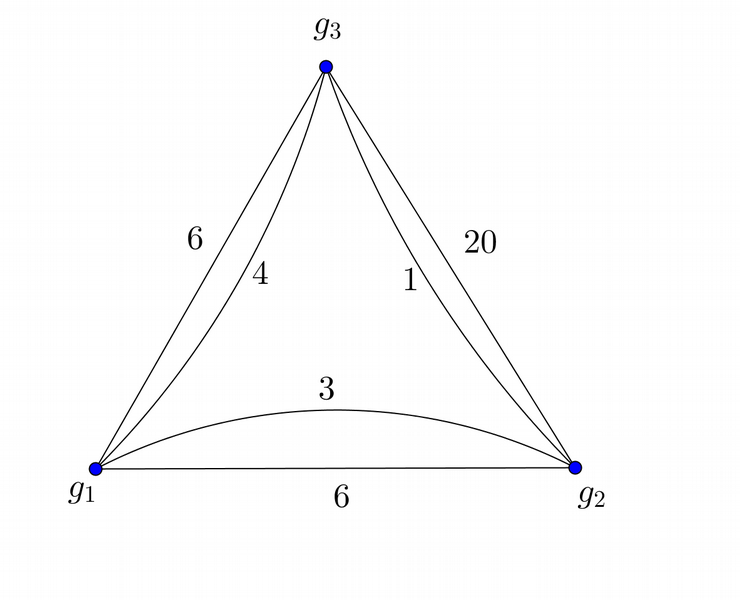}\\
Figure \ref{fig:basis}a
    &Figure \ref{fig:basis}b\\
\includegraphics[scale=0.3]{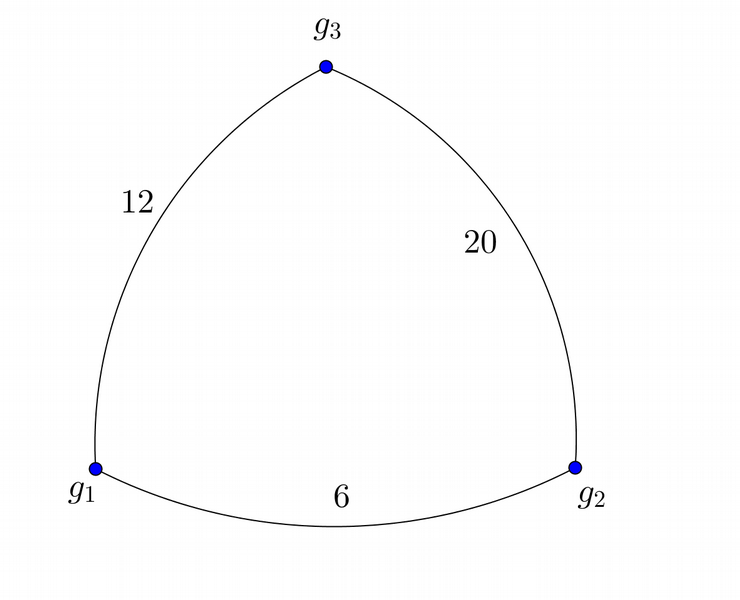}
    &\includegraphics[scale=0.3]{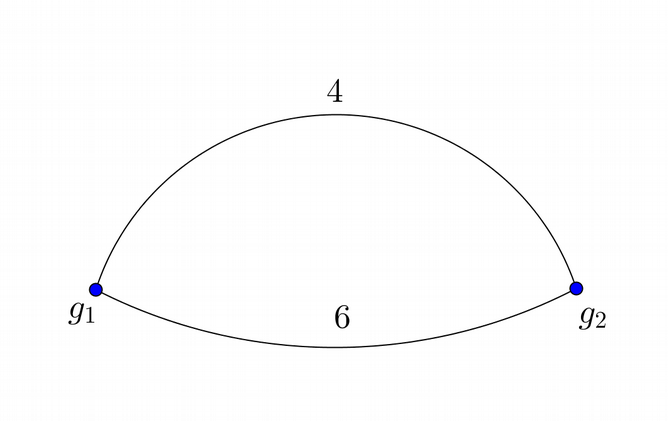}\\
Figure \ref{fig:basis}c
    &Figure \ref{fig:basis}d
\end{tabular}

\begin{tabular}{c}
\includegraphics[scale=0.3]{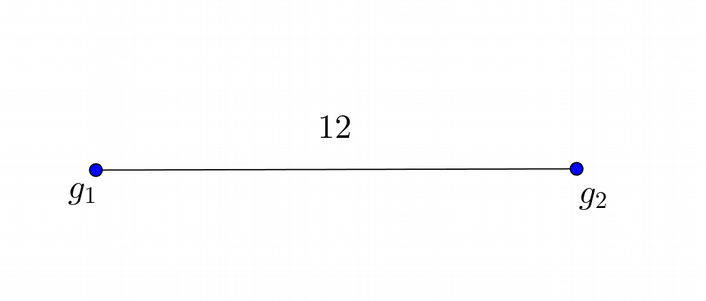}\\
Figure \ref{fig:basis}e
\end{tabular}
\caption{Basis for $K_{4}$}
\label{fig:basis}
\end{center}
\end{figure}

\begin{example}
\label{example:k4}
Let $G = K_{4}$ with the sequence of collapsing operations in Figures \ref{fig:basis}a through \ref{fig:basis}e.  

In order to find $M_4$, we set $g_{1} = 0$, $g_{2} = 0$, $g_{3} = 0$ in Figure \ref{fig:basis}a.  
%From  (Figure \ref{fig:basis}a) we get the following equations.
This gives us the follow equations.
\begin{align*}
g_{4}   &\equiv 0 \bmod 12\\
g_{4}   &\equiv 0 \bmod 15\\
g_{4}   &\equiv 0 \bmod 8
\end{align*}
By Theorem \ref{thm:completecollapse}, the leading term $g_4$ will be the LCM of the edge weights incident to $v_4$, so   $g_{4} =[12, 15, 8] = 120$.  This gives basis element $M_{4} = (0, 0, 0, 120)$.

Next let $g_{1} = 0$, $g_{2} = 0$ in (Figure \ref{fig:basis}c).  The equations are
\begin{align*}
g_{3}   &\equiv 0 \bmod 20\\
g_{3}   &\equiv 0 \bmod 12
\end{align*}
which has least solution $g_{3} =[12, 20] = 60$.  We can let $g_{4} = 0$, which gives us a basis element $M_{3} = (0, 0, 60, 0)$. 

Now consider Figure \ref{fig:basis}e and let $g_{1} = 0$. Then $g_{2} \equiv 0 \bmod 12$, so $g_{2} = 12$ is minimal.  We are guaranteed values for $g_3$ and $g_4$ by the Chinese Remainder Theorem, but in this case we easily find a basis element $M_{2} = (0, 12, 12, 12)$.

Finally, we can always set $M_{1} = \left(1,1,1,1\right)$.

\end{example}

In the next section we find an alternate construction of $L(M_i)$, in terms of the original edge weights of $G$.  This construction does not require collapsing $G$ to the graph $G_i$, in terms of the original edge weights of $G$.

%%%%%%%%%%%%%%%%%%%%%%%%%%%%%%%%%%%
\section{Leading Terms of Basis Elements}

Let $(G,A)$ be an edge-weighted graph and let  $(G', A')$ be the result of a star-clique operation removing vertex $v$ from $G$.  Let $w$ and $u$ be vertices in $G$ distinct from $v$.  We introduce the following notation.

\begin{itemize} 
\item $(p)$ = the GCD of the edge labels of a path $p$.
\item  $\mathcal{P}$ = the set of paths between  $w$ and $u$  in $G$.
\item  ${\mathcal{Q}}$ = the set of paths between $w$ and $u$ in $G'$.
\item $[\mathcal{P}]$ = the LCM of the $(p)$'s, where $p \in \mathcal{P}$.
\item $[\mathcal{Q}]$ = the LCM of the $(q)$'s, where $q \in \mathcal{Q}$.
\end{itemize}

Next, we show that  $[\mathcal{P}]$ is invariant under the star-clique operation.

\begin{lemma}
\label{lemma:LCM invariance 1}
Let $v$ be a vertex of  $G$ and let $(G', A')$ be the result a star-clique operation.  Let $w, u$ be vertices in $G$ distinct from $v$.  Then $[\mathcal{P}] =  [\mathcal{Q}]$.
\end{lemma}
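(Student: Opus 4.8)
The plan is to convert the divisibility statement $[\mathcal{P}]=[\mathcal{Q}]$ into a statement about graph connectivity. The device is the elementary fact that a positive integer is determined by the set of prime powers dividing it, together with the observations that a prime power $\pi=\ell^{k}$ divides an LCM of integers exactly when it divides one of them, and divides a GCD exactly when it divides all of them. So for a prime power $\pi$, let $G_{\pi}$ be the spanning subgraph of $G$ whose edges are those with label divisible by $\pi$, and define $G'_{\pi}\subseteq G'$ the same way; the goal becomes to show, for every $\pi$, that $w$ and $u$ lie in the same component of $G_{\pi}$ if and only if they lie in the same component of $G'_{\pi}$.

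The first step is to record the translations. For a path $p$, $\pi\mid (p)$ iff every edge label along $p$ is divisible by $\pi$, i.e. iff $p$ is a path of $G_{\pi}$; hence
\[
\pi\mid[\mathcal{P}]\iff \exists\, p\in\mathcal{P}\text{ with }\pi\mid(p)\iff w\sim u\text{ in }G_{\pi},
\]
and likewise $\pi\mid[\mathcal{Q}]\iff w\sim u$ in $G'_{\pi}$. (One may use walks in place of simple paths throughout, since a walk from $w$ to $u$ with all labels divisible by $\pi$ contains a simple such path, and conversely.) Given these equivalences, the lemma reduces to the connectivity claim above.

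For the connectivity claim I would describe $G'_{\pi}$ directly in terms of $G_{\pi}$. Let $v$ have neighbors $v_{1},\dots,v_{d}$ with incident weights $a_{1},\dots,a_{d}$, and set $N=\{\,i:\pi\mid a_{i}\,\}$, so that in $G_{\pi}$ the vertex $v$ is adjacent to exactly $\{v_{i}:i\in N\}$. Since $\pi\mid(a_{i},a_{j})$ iff $\pi\mid a_{i}$ and $\pi\mid a_{j}$, the new clique edge $v_{i}v_{j}$ of $G'$ lies in $G'_{\pi}$ precisely when $i,j\in N$; every other edge of $G'$ is an edge of $G$ not incident to $v$ and lies in $G'_{\pi}$ iff it lies in $G_{\pi}$. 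Thus $G'_{\pi}$ is obtained from $G_{\pi}$ by deleting $v$ and adding all edges among $\{v_{i}:i\in N\}$. Now, for vertices $w,u\neq v$: from a walk in $G_{\pi}$ joining $w$ and $u$, replace each passage $v_{i},v,v_{j}$ by the edge $v_{i}v_{j}$ (deleting trivial backtracks $v_{i},v,v_{i}$); the traversed edges $vv_{i},vv_{j}$ lie in $G_{\pi}$, so $i,j\in N$ and $v_{i}v_{j}\in G'_{\pi}$, giving a walk in $G'_{\pi}$. Conversely, from a walk in $G'_{\pi}$, replace each clique edge $v_{i}v_{j}$ it uses by the subwalk $v_{i},v,v_{j}$; since $v_{i}v_{j}\in G'_{\pi}$ forces $i,j\in N$, the edges $vv_{i},vv_{j}$ lie in $G_{\pi}$, so the new walk lies in $G_{\pi}$. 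Hence $w\sim u$ in $G_{\pi}\iff w\sim u$ in $G'_{\pi}$, completing the argument.

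The only delicate point, and the reason for passing to connectivity, is a path $q\in\mathcal{Q}$ that runs through several of the new clique edges: such a $q$ cannot simply be lifted edge-by-edge to a \emph{simple} path of $G$ through $v$, since that would route through $v$ repeatedly. In the connectivity formulation walks suffice and this obstruction disappears. Beyond this, the only arithmetic needed is the prime-power behaviour of GCD/LCM and the identity $\pi\mid(a_{i},a_{j})\iff\pi\mid a_{i}\text{ and }\pi\mid a_{j}$ (a special case of the bookkeeping in Lemma \ref{lemma:gcd}).
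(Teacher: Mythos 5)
Your proposal is correct, but it proves the lemma by a genuinely different route than the paper. The paper argues directly at the level of paths: a path $p$ through $v$ with consecutive star edges labeled $c_i,c_{i+1}$ corresponds to a path $p'$ in $G'$ using the new edge labeled $(c_i,c_{i+1})$, and the identity $(a,(b,c))=(a,b,c)$ from Lemma \ref{lemma:gcd} gives $(p)=(p')$; conversely each path in $\mathcal{Q}$ is lifted back to $G$ by rerouting new clique edges through $v$, so the two multisets of GCDs coincide and the LCMs agree. You instead localize at a prime power $\pi$, observe that $\pi\mid[\mathcal{P}]$ iff $w$ and $u$ are joined in the subgraph $G_\pi$ of edges with label divisible by $\pi$, and then check that the star-clique operation preserves this connectivity because $\pi\mid(a_i,a_j)$ iff $\pi$ divides both $a_i$ and $a_j$. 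What your approach buys is precisely the point you flag at the end: the paper's reverse direction produces, from a path $q\in\mathcal{Q}$ using several new clique edges, a lift $q^{+}$ that revisits $v$ and so is a walk rather than a simple path; making that rigorous requires extracting a simple subpath $p$ and noting $(q)\mid(p)$ since the GCD over a subset of edges is a multiple of the GCD over all of them. Your connectivity formulation dissolves this issue entirely, at the cost of invoking prime factorization, whereas the paper's argument is more elementary and yields the sharper statement that the individual GCD values (not just their LCM) are matched between $\mathcal{P}$ and $\mathcal{Q}$.
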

\begin{proof}
 We partition $\mathcal{P}$ into two sets: 
\begin{itemize}
\item  $\mathcal{P}_+$ =  paths that include $v$;
\item ${\mathcal{P}}_-$ = paths not including $v$.
\end{itemize}

Let $p$ be a path in $\mathcal{P}_-$.  Since this path doesn't includes $v$, it will also be a path in $G'$, hence $p$ is in $\mathcal{Q}$.   Now let $p$ be a path in $\mathcal{P}_+$ with edge labels $c_{1}, c_{2},  \ldots, c_{n}$. Since this $p$ includes $v$, it contains two vertices $x$ and $y$ and two edges $e$ and $f$  adjacent to $v$,  with edge labels $c_i$ and $c_{i+1}$.  Thus,
$$(p) = (c_{1}, c_{2}, \ldots, c_i, c_{i+1}, \ldots, c_{n})$$
However, by the star-clique construction, there will be an edge in $G'$  directly between  $x$ and $y$  with edge label  $(c_i, c_{i+1})$. Consequently there is a path $p'$ in $G'$ with
$$(p') = (c_{1}, c_{2}, \ldots, (c_{i}, c_{i+1}), \ldots, c_{n}).$$
But since $(a, (b, c)) = (a, b, c)$, we  have $(p) = (p')$.

Now let $q = e_1\cdots e_n$ be a path in $\mathcal{Q}$ and $$(q) = (c_{1}, c_{2}, \ldots,  c_{n})$$ If an $e_i$ was not an edge in $G$, then its vertices $x$ and $y$  must be adjacent to $v$ in $G$.  Thus there are edges $e_x$ and $e_y$ in $G$ with weights $c_x$ and $c_y$, which means  $e_i$ in $G'$ has weight $c_i =(c_x,c_y)$.  Let $q_i = e_1\cdots e_{i-1}e_xe_ye_{i+1} e_n$, then 
$$(q_i) = (c_{1}, c_{2}, \ldots, c_{i-1},c_x,c_y, c_{i+1}, \ldots, c_{n})$$
$$ = (c_{1}, c_{2}, \ldots, c_{i-1},(c_x,c_y), c_{i+1}, \ldots, c_{n})$$
$$ = (c_{1}, c_{2}, \ldots, c_{i-1},c_i, c_{i+1}, \ldots, c_{n}) = (q)$$
Since we can do this for each such edge $e_i$, we see that there is a path $q^+$ in $G$ with  $(q^+) = (q)$.  Consequently, the GCDs used to compute $[\mathcal{P}]$ are identical to the GCDs used to compute $[\mathcal{Q}]$, so $[\mathcal{P}]=[\mathcal{Q}]$.
\end{proof}

We now prove that $[\mathcal{P}]$ is invariant under edge-collapse.
\begin{lemma}
\label{lemma:LCM invariance 2}
Let $(G', A')$ be the result an edge-collapse operation, and let $w, u$ be vertices of $G$.  Then $[\mathcal{P}] =  [\mathcal{Q}]$.
\end{lemma}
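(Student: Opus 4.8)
The plan is to follow the template of Lemma~\ref{lemma:LCM invariance 1}: partition the $w$--$u$ paths according to whether they traverse the bundle of multiple edges being collapsed, observe that the untouched paths contribute identically to both sides, and reduce the remaining comparison to the identity $[(a,c),(b,c)]=([a,b],c)$ from Lemma~\ref{lemma:gcd}.

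First I fix notation. Say the edge-collapse operation removes multiple edges $e_1,\ldots,e_r$ joining a pair of vertices $x,y$, with labels $a_1,\ldots,a_r$, and inserts a single edge $e$ joining $x$ and $y$ with label $[a_1,\ldots,a_r]$; I write $x,y$ here because the names $v,w$ from the definition of edge-collapse clash with the vertices $w,u$ of the Lemma. Partition $\mathcal{P}=\mathcal{P}_-\cup\mathcal{P}_+$, where $\mathcal{P}_+$ is the set of $w$--$u$ paths of $G$ that use one of the $e_i$ and $\mathcal{P}_-$ the set of those using none; since a path visits $x$ and $y$ at most once it uses at most one edge of the bundle, so this is a genuine partition. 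Partition $\mathcal{Q}=\mathcal{Q}_-\cup\mathcal{Q}_+$ the same way, with $e$ in place of the bundle. Because the collapse alters only the bundle edges, $\mathcal{P}_-$ and $\mathcal{Q}_-$ are literally the same family of edge-labeled paths, so the LCM of the $(p)$'s over $\mathcal{P}_-$ equals the LCM over $\mathcal{Q}_-$. Using that the LCM is associative and commutative (Lemma~\ref{lemma:gcd}(2)), it therefore suffices to prove that the LCM of the $(p)$'s over $\mathcal{P}_+$ equals the LCM of the $(q)$'s over $\mathcal{Q}_+$.

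For the $+$ parts I set up an $r$-to-one correspondence. Each $q\in\mathcal{Q}_+$ lifts to exactly the $r$ paths $p^{(1)},\ldots,p^{(r)}\in\mathcal{P}_+$ obtained by replacing the edge $e$ of $q$ by $e_i$; conversely every element of $\mathcal{P}_+$ arises this way, from a unique $q$. Writing $d_q$ for the GCD of the labels of $q$ other than the label of $e$, we get $(p^{(i)})=(d_q,a_i)$ and $(q)=(d_q,[a_1,\ldots,a_r])$. Iterating Lemma~\ref{lemma:gcd}(2)--(3) gives $[(d_q,a_1),(d_q,a_2),\ldots,(d_q,a_r)]=(d_q,[a_1,\ldots,a_r])=(q)$; that is, the LCM of the GCDs of the $r$ lifts of $q$ is exactly $(q)$. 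Taking the LCM over all $q\in\mathcal{Q}_+$ and regrouping (again by Lemma~\ref{lemma:gcd}(2)) equates the two $+$-side LCMs, and combining with the equality on the $-$ parts yields $[\mathcal{P}]=[\mathcal{Q}]$.

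I expect the only routine care is combinatorial bookkeeping --- that a path uses at most one edge of the collapsed bundle, that lift/projection really is an $r$-to-one correspondence $\mathcal{P}_+\leftrightarrow\mathcal{Q}_+$, and that regrouping the overall LCM along this correspondence is legitimate. The one genuinely algebraic ingredient is the iterated identity $[(d,a_1),\ldots,(d,a_r)]=(d,[a_1,\ldots,a_r])$, playing here the role that $(a,(b,c))=(a,b,c)$ played in Lemma~\ref{lemma:LCM invariance 1}; I anticipate no real obstacle once it is stated carefully. The degenerate cases ($w=u$, or $\{w,u\}$ meeting $\{x,y\}$, or no $w$--$u$ path at all) should be checked but cause no difficulty.
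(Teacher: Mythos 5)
Your proposal is correct and follows essentially the same route as the paper: the same partition of paths by whether they use the collapsed bundle, the same $r$-to-one lifting correspondence between $\mathcal{Q}_+$ and $\mathcal{P}_+$, and the same key identity $[(d,a_1),\ldots,(d,a_r)]=(d,[a_1,\ldots,a_r])$ obtained by iterating Lemma~\ref{lemma:gcd}(3). Your explicit observation that a path uses at most one edge of the bundle (so the partition and correspondence are genuine) is a small tidiness improvement over the paper's write-up, but the argument is the same.
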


\begin{proof}
Let $d_1,d_2,\ldots,d_r$ with weights  $c_1c_2,\ldots, c_r$ be the edges in $G$ that are collapsed to an edge $d$ with weight $c=[c_1,c_2,\ldots,c_r]$ in $G'$.   

We can partition $\mathcal{P}$  and  $\mathcal{Q}$ into two sets:
\begin{itemize}
\item $\mathcal{P}_1$ = paths that include any of the $d_i$
\item $\mathcal{P}_2$ = paths that include none of the $d_i$
\item $\mathcal{Q}_1$ = paths that include $d$
\item $\mathcal{Q}_2$ = paths that don't include $d$
\end{itemize}

Let $p \in\mathcal{P}_2$.  Since $p$ does not include any $d_i$, all of its edges are in $G'$, so $p \in \mathcal{Q}_2.$  Similary, if   $p \in \mathcal{Q}_2$ then $p \in \mathcal{P}_2.$ Thus   $\mathcal{P}_2 = \mathcal{Q}_2$.   

Now let $p_1 \in \mathcal{P}_1$ contain edges $d_1$, with remaining edges $e_1,\ldots, e_s$ in both $G$ and $G'$, and edge weights $a_1,\ldots, a_s$.   Clearly, for each $1 < i \leq r$ there is a path $p_i \in \mathcal{P}_1$ with edges $d_i, e_1,\ldots, e_s $.  Thus we have
\begin{align*}
(p_1) &= (c_i, a_{1}, a_{2}, \ldots, a_{s})\\
(p_2) &= (c_i, a_{1}, a_{2}, \ldots, a_{s})\\
&\vdots \\
(p_r) &= (c_i, a_{1}, a_{2}, \ldots, a_{s})
\end{align*}

These correspond directly to the path $p'$ in $\mathcal{Q}_1$ with edges $d, e_1,\ldots, e_s$
and  $$(p') = ([c_1,c_2,\ldots,c_r], a_{1}, a_{2}, \ldots, a_{s})$$
Similarly, any $p' \in  \mathcal{Q}_1$ corresponds to $p_1,\ldots,p_r$ in $\mathcal{P_1}$.
Let $a = (a_{1}, a_{2}, \ldots a_{n})$.    By repeated applications of Lemma \ref{lemma:gcd}, we have 
\begin{align*}
[(p_1),\dots (p_r)]
&=[(c_1, a_{1}, a_{2}, \ldots a_{s}), \ldots, (c_r, a_{1}, a_{2}, \ldots a_{s})]\\ 
&= [(c_1, (a_{1}, a_{2}, \ldots a_{s})),\ldots a_{s})), \ldots, (c_r, (a_{1}, a_{2}, \ldots a_{s}))]\\ 
&= [(c_1, a), \ldots, (c_r, a)]\\ 
&= [(a, [c_1, \ldots, c_r )] \\
&= (p')
\end{align*}

Let $\mathcal{Q}_1 = \{x_1,\dots x_m\}$, $\mathcal{Q}_2 = \{y_1,\dots, y_k\}$ and $y = [y_1,\dots y_k].$  Then  each $x_i \in \mathcal{Q_1}$ corresponds to $r$ different paths $x_{ij}  \in \mathcal{P}_1$, and by above, each$(x_i) =[(x_{i1}),\dots (x_{ir})]$.

By repeated applications of  Lemma \ref{lemma:gcd} we have 

\begin{align*}
[\mathcal{Q}] 
&= [(x_1),\dots, (x_m), (y_1),\dots, (y_k)]\\
&=  [(x_1),\dots, (x_m), [(y_1),\dots, (y_k)]\\
&=  [(x_1),\dots, (x_m), y]\\
&=  [(x_{11}),\dots, (x_{1r})],\dots, [(x_{m1}),\dots, (x_{mr})], y]\\
&= [(x_{11}),\dots, (x_{1r})],\dots, [(x_{m1}),\dots, (x_{mr})], [(x_1),\dots, (x_m), (y_1),\dots, (y_k)] ]\\
&= [(x_{11}),\dots, (x_{1r}),\dots, (x_{m1}),\dots (x_{mr}), (x_1),\dots, (x_m), (y_1),\dots, (y_k) ]\\
&= [\mathcal{P}]
\end{align*}

\end{proof}

We have the following important corollary. 

\begin{corollary}
\label{corollary:lcmgcd}
Let $w,u$ be two vertices in both $G$ and $G'$.  The LCM of the GCD of all paths from $w$ to $u$ is invariant under collapsing operations taking $G$ to $G'$.
\end{corollary}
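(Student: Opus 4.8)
The plan is first to recognize that there is essentially nothing new to prove: the quantity ``the LCM of the GCD of all paths from $w$ to $u$'' computed in $G$ is, by definition, exactly $[\mathcal{P}]$, and the same quantity computed in $G'$ is exactly $[\mathcal{Q}]$. So the corollary is precisely the assertion $[\mathcal{P}] = [\mathcal{Q}]$ whenever $G'$ is obtained from $G$ by collapsing operations, with $w$ and $u$ vertices of both.

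Next I would unwind what ``a sequence of collapsing operations'' means: by definition each individual collapsing operation is either a star-clique operation or an edge-collapse operation, and for these two cases the equality $[\mathcal{P}] = [\mathcal{Q}]$ is exactly the content of Lemma \ref{lemma:LCM invariance 1} and Lemma \ref{lemma:LCM invariance 2} respectively. So the corollary holds for a single collapsing operation, and the remaining task is just to promote this to a finite composition.

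I would then induct on the number $k$ of operations in a sequence $G = H_0 \to H_1 \to \cdots \to H_k = G'$. For $0 \le i \le k$ write $[\mathcal{P}_i]$ for the LCM of the path-GCDs from $w$ to $u$ in $H_i$. The base case $k = 0$ (or $k=1$, which is the two lemmas) is immediate. For the inductive step, the single-operation case applied to $H_{k-1} \to H_k$ gives $[\mathcal{P}_{k-1}] = [\mathcal{P}_k]$, and the inductive hypothesis applied to $H_0 \to \cdots \to H_{k-1}$ gives $[\mathcal{P}_0] = [\mathcal{P}_{k-1}]$; chaining these yields $[\mathcal{P}_0] = [\mathcal{P}_k]$, as desired.

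The one point requiring a little care — and the only real (though minor) obstacle — is checking that the two lemmas actually apply at every step, i.e.\ that $w$ and $u$ remain vertices of each intermediate graph $H_i$ and, for a star-clique step, are never the vertex being collapsed. This follows because no collapsing operation ever introduces a new vertex, so a vertex present in $H_k = G'$ is present in every $H_i$; and since star-clique deletes the collapsed vertex permanently, no step in the sequence can be a star-clique operation on $w$ or $u$. Hence the hypotheses of Lemmas \ref{lemma:LCM invariance 1} and \ref{lemma:LCM invariance 2} are met at each stage, and the induction goes through. Once that bookkeeping is in place, the corollary is immediate.
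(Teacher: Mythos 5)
Your proposal is correct and matches the paper's intent exactly: the paper states this corollary without proof, treating it as the immediate consequence of Lemmas \ref{lemma:LCM invariance 1} and \ref{lemma:LCM invariance 2} via the obvious induction on the length of the collapse sequence, which is precisely the argument you spell out. Your added bookkeeping that $w$ and $u$ survive every intermediate graph is a reasonable (and correct) extra check that the paper leaves implicit.
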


Using this result, we can construct leading terms of basis elements in terms of LCMs and GCDs of the edge weights of $G$. 

\begin{theorem}
\label{theo:LCM of GCD}
Let $M_1,M_2,\ldots M_{n}$ be a flow up class basis for $S_G$ and let $\mathcal{P}_{ij}$ denote the set of paths from $v_i$ to $v_j$ in $S_G$ where $i<j$.  Then $L(M_1) = 1$, and for $i>1$, and $L(M_{i}) = [ [\mathcal{P}_{i1}],[\mathcal{P}_{i2}],\ldots, [\mathcal{P}_{i,i-1}] ]$.
\end{theorem}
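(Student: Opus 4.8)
The plan is to reduce to the specific flow-up class basis produced by a complete collapse sequence (Theorem~\ref{thm:completecollapse}) and then reinterpret the leading term $[c_1,\dots,c_d]$ appearing there via the path-invariance result, Corollary~\ref{corollary:lcmgcd}.

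First I would observe that $L(M_i)$ is independent of the chosen flow-up class basis: by the remark following the definition of minimal element, any two minimal elements of $\mathcal F_{i-1}$ have the same (least positive) leading term, so $L(M_i)$ depends only on $(G,A)$ and the vertex ordering. Hence it suffices to verify the formula for one flow-up class basis, and I would take $\{M_1,\dots,M_n\}$ attached to a complete collapse sequence $\{G_n,\dots,G_1\}$ with $G=G_n$. For $i=1$ this is immediate: $\mathbf 1=(1,\dots,1)\in S_G\cap\mathcal F_0$ has leading term $1$, the least positive integer, so $L(M_1)=1$. For $i>1$, Theorem~\ref{thm:completecollapse}(3)--(4) together with the leading-term preservation in Theorem~\ref{theorem:phitheorem}(2) give $L(M_i)=L(m_i)=[c_1,\dots,c_d]$, where $c_1,\dots,c_d$ are the weights of the edges of $st(v_i)$ inside $G_i$.

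The core step is the purely local identity, inside the fixed graph $G_i$ on the vertices $v_1,\dots,v_i$,
$$[c_1,\dots,c_d]\;=\;\bigl[\,[\mathcal P^{G_i}_{i1}],\dots,[\mathcal P^{G_i}_{i,i-1}]\,\bigr],$$
where $\mathcal P^{G_i}_{ij}$ is the set of paths from $v_i$ to $v_j$ in $G_i$. I would prove it by two divisibilities. Since every path from $v_i$ to some $v_j$ starts with an edge of $st(v_i)$, its GCD divides one of $c_1,\dots,c_d$, hence divides $[c_1,\dots,c_d]$; taking LCMs, $[\mathcal P^{G_i}_{ij}]\mid[c_1,\dots,c_d]$ for every $j<i$, so the right-hand LCM divides $[c_1,\dots,c_d]$. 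Conversely, each edge of $st(v_i)$ joins $v_i$ to some $v_k$ with $k<i$ (as $G_i$ has exactly $i$ vertices), and as a length-one path it shows that its weight divides $[\mathcal P^{G_i}_{ik}]$, hence divides the right-hand LCM; since this holds for every edge of $st(v_i)$, $[c_1,\dots,c_d]$ divides the right-hand LCM. Two positive integers dividing one another are equal. Finally, the collapsing operations carrying $G$ to $G_i$ delete only the vertices $v_{i+1},\dots,v_n$, so $v_i$ and each $v_j$ with $j<i$ survive; Corollary~\ref{corollary:lcmgcd} then gives $[\mathcal P^{G_i}_{ij}]=[\mathcal P_{ij}]$, and substituting into the local identity yields $L(M_i)=[\,[\mathcal P_{i1}],\dots,[\mathcal P_{i,i-1}]\,]$.

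I do not anticipate a genuine difficulty: the substantive invariance has already been established in Lemmas~\ref{lemma:LCM invariance 1}--\ref{lemma:LCM invariance 2} and Corollary~\ref{corollary:lcmgcd}, and what remains is assembly. The two places to be careful are (i) the legitimacy of passing to the complete-collapse-sequence basis, which hinges on uniqueness of leading terms of minimal elements, and (ii) checking that each intermediate graph is connected (Lemma~\ref{lemma:connectivity}), so that every $\mathcal P_{ij}$ occurring is a non-empty finite set and $[\mathcal P_{ij}]$ is a well-defined positive integer; otherwise the LCMs would be vacuous.
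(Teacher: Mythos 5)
Your proof is correct, and its overall architecture is the paper's: reduce to the complete-collapse basis of Theorem~\ref{thm:completecollapse}, identify $L(M_i)=[c_1,\dots,c_d]$ with the weights of $st(v_i)$ in $G_i$, and use Corollary~\ref{corollary:lcmgcd} to pass between $G$ and $G_i$. The one step you handle genuinely differently is the bridge between those star weights and the path quantities. The paper matches terms individually: it asserts that the edge joining $v_i$ to a neighbor $v_j$ is ``the only path from $v_i$ to $v_j$ in $G_i$,'' concludes $c_j=[\mathcal{P}_{ij}]$, and then separately absorbs the paths to non-neighbors. That term-by-term identification is not literally true --- in the $K_4$ example the edge $v_3v_2$ of $G_3$ has weight $20$ while $[\mathcal{P}_{32}]=60$ --- and only the aggregate least common multiple comes out right. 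Your version proves exactly that aggregate identity inside $G_i$ by mutual divisibility (every path leaving $v_i$ begins with an edge of $st(v_i)$, so its GCD divides some $c_k$; and every edge of $st(v_i)$ is itself a one-edge path to some $v_k$ with $k<i$), and only afterwards invokes the invariance corollary to replace each path quantity computed in $G_i$ by $[\mathcal{P}_{ij}]$ computed in $G$. This buys two things: the non-neighbor case is subsumed by the first divisibility rather than argued separately, and no false intermediate equality is needed. Your two flagged cautions are also the right ones; in particular connectivity of each $G_i$ (Lemma~\ref{lemma:connectivity}) is what makes every $\mathcal{P}_{ij}$ nonempty, so that the least common multiples in the statement are defined.
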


\begin{proof}
$M_1 = (1,\ldots, 1)$ is minimal in ${\mathcal{F}}_0$ since 1 is minimal in $\mathbb{N}$.  We know from Theorem  that $L(M_{i}) = L(m_i)$ where $m_i = (0,0,\ldots, [c_1,\ldots, c_r])$ is a minimal element of $K_i$, and $c_j$ is the edge label of the edge between $v_i$ and $v_j$ , for all $ j  \leq r \leq i$.  (If necessary we can temporarily reorder the $ v_j$'s incident to $v_i$ so that they are vertices $v_1,\ldots,v_r$.) For each $j$,  the edge $e_j$ with label $c_j$  is the only path form $v_i$ to $v_j$ in $G_i$.     By Lemma \ref{lemma:gcd},  this means that  $c_j =[\mathcal{P}_{ij}]$.  Then  $L(M_{i}) = [c_1,\ldots, c_r]= [ [\mathcal{P}_{i1}],[\mathcal{P}_{i2}],\ldots, [\mathcal{P}_{i,r}] ]$.  If $r=i-1$, we are done.  If $r<i$, let $p$ be a path  from $v_i$ to  $v_k$  where $ r  < k \leq i$.  This path necessarily passes through some $v_j$ for some $ j\leq r$, so $d_j$ is one of the edges of $p$.  Let $b$ be the GCD of the edge weights of the remaining edges of $p$.  Then $(p) = (c_j, b)$.   Since $(p)$ divides $c_j$, we have  $[c_j] = [c_j,(p)]$ and thus $[c_1,\ldots, c_r] = [c_1,\ldots, c_r, (p)]$.
Since we can do this for any path from $v_i$ to $v_k$,  we get that $[c_1,\ldots, c_r] = [c_1,\ldots, c_r, [P_{ik}]]$ and by varying $k$ we get that $L(M_{i}) = [c_1,\ldots, c_r,  [\mathcal{P}_{i,r}],\ldots, [\mathcal{P}_{i,i-1}]]= [ [\mathcal{P}_{i1}],[\mathcal{P}_{i2}],\ldots, [\mathcal{P}_{i,i-1}] ]$.
\end{proof}

We see from this proof that we don't need to check all paths from $v_i$ to all vertices $v_j$, just paths from $v_i$ to $v_j$ where all interior vertices have subscripts bigger than $i$.  An easy way to identify these paths is to label all vertices $v_{1}$ to $v_{i-1}$ with 0's, and use only paths from $v_i$ to the first 0 you run into. 

\begin{figure}


\begin{center}
\begin{tabular}{cc}
$\kfour{0.35}{g_{1}}{g_{2}}{g_{3}}{g_{4}}$
\end{tabular}
\end{center}
\caption{Edge-weighted $K_{4}$}
\label{fig:k4 graph}
\end{figure}

%%%%%%%%%%%%%%%%%%%%%
% FOR JEFF %%%%%%%%%%%%%
%%%%%%%%%%%%basis section%%%%%%%%%%%%%%%%%
% here we need figure 6a, several times.  
% 1.  As is.  2, g_1=0.  3.  g_1, g_2=0, 4. g1, g2, g3=0

% . Thanks
%%%%%%%%%%%%%%%%%%%%%%%%%%%%%%%%%%%%%%%%%%%%%

\begin{figure}
\begin{center}
\begin{tabular}{cc}
\raisebox{1in}{Figure \ref{fig:nocollapse}a}
    &\includegraphics[scale=0.35]{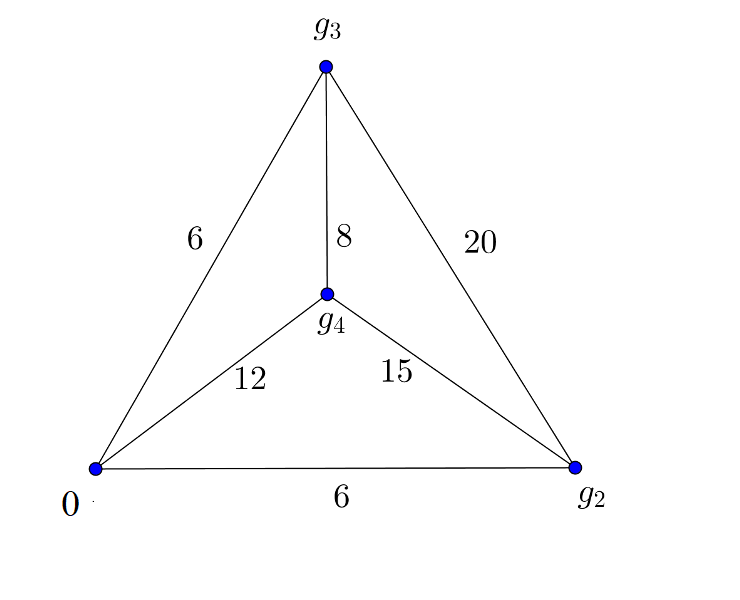}\\
\raisebox{1in}{Figure \ref{fig:nocollapse}b}
    &\includegraphics[scale=0.35]{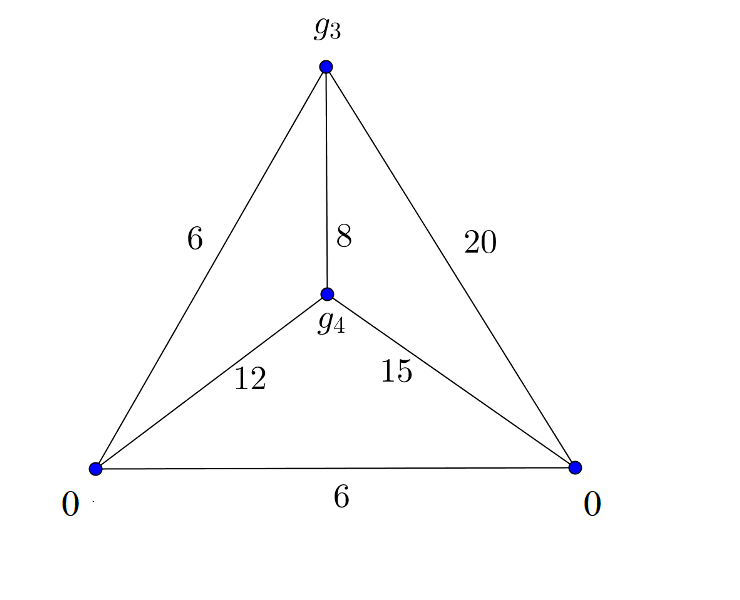}\\
\raisebox{1in}{Figure \ref{fig:nocollapse}c}
    &\includegraphics[scale=0.35]{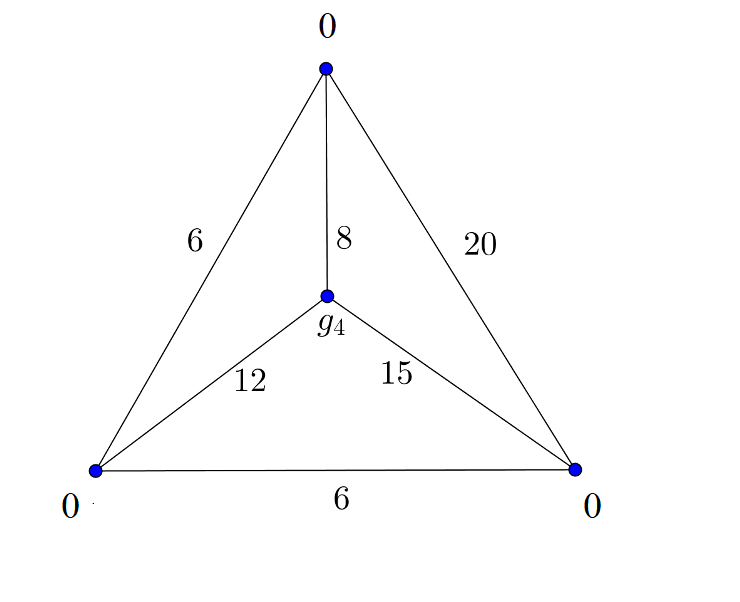}
\end{tabular}
\end{center}
\caption{Finding Leading Terms Using Theorem \ref{theo:LCM of GCD}}
\label{fig:nocollapse}
\end{figure}

\begin{example}
We will use Theorem \ref{theo:LCM of GCD} to compute the leading terms of a flow class basis for $S_{G}$, the $K_{4}$ graph of Figure \ref{fig:k4 graph}.

\begin{enumerate}
\item First, we have $L(M_1)=1$. 
\item In order to find $L(M_{2})$ we set $g_1=0$ as in Figure \ref{fig:nocollapse}a. Then $L(M_{2})$ is the LCM of the GCD of all paths from $g_{2}$ to any vertex labeled $0$.  These paths are:
\begin{itemize}
\item
$g_{2}g_{1}$, which has edge weight $6$,
\item
$g_{2}g_{4}g_{1}$, which has edge weights $12, 15$ 
%and $(12, 15) = 3$,
\item
$g_{2} g_{3}g_{1}$, which has edge weights $20, 6$ 
%and $(20, 6) = 2$,
\item
$g_{2}g_{4}g_{3}g_{1}$, which has edge weights $15, 8, 6$ 
%and $(15, 8, 6) = 1$,
\item
$g_{3}g_{4}g_{1}$, which has edge weights $20, 8, 12$ 
%and $(20, 8, 12) = 4$.
\end{itemize}
Thus  $L(M_{2}) = [6,(12,15),(20,6),(15,8,6),(20,8,12)]=[6, 3, 2, 1, 4] = 12$.

\item For  $L(M_{3})$, we set both $g_{1}$, $g_{2}$ equal $0$  as in Figure \ref{fig:nocollapse}b.
The paths from $g_{3}$ to any vertex labeled $0$ are:
\begin{itemize}
\item
$g_{3}g_{2}$, with edge weight $20$,
\item
$g_{3}g_{1}$, with edge weight $6$,
\item
$g_{3}g_{4}g_{1}$, with edge weights $8, 12$ 
% and $(8, 12) = 4$,
\item
$g_{3}g_{4}g_{2}$, with edge weights $8, 15$
%and $(8, 15) = 1$.
\end{itemize}
Thus $L(M_{3}) = [20, 6, (8,12), (8,15)]=[20, 6, 4, 1] = 60$.

\item Finally to compute $L(M_{4})$ we set $g_{1}$, $g_{2}$, $g_{3}$ equal to $0$, as in Figure \ref{fig:nocollapse}c, and find the paths from $g_{4}$ to $0$:
\begin{itemize}
\item
$g_{4}g_{1}$, with weight $12$,
\item
$g_{4}g_{2}$, with weight $15$,
\item
$g_{4}g_{3}$, with weight $8$,
\end{itemize}

Then $L(M_{4})= [12, 15, 8] = 120$.
\end{enumerate}
\end{example}

Thus the leading terms of $M_1$, $M_2$, $M_3$, $M_4$ are $1$, $12$, $60$, and $120$ respectively.  These agree with the leading terms found in Example \ref{example:k4}, illustrating that Theorems \ref{thm:completecollapse} and \ref{theo:LCM of GCD} provide two different ways to contruct the same basis for $S_G$. 

\bibliographystyle{amsplain}

%}
\end{document}